\documentclass[11pt]{amsart}

\usepackage{amsmath,amssymb,amsthm,mathtools}
\usepackage[hidelinks]{hyperref}
\usepackage[margin=1.10in]{geometry}

\newtheorem{theorem}{Theorem}[section]
\newtheorem{proposition}[theorem]{Proposition}
\newtheorem{lemma}[theorem]{Lemma}
\newtheorem{corollary}[theorem]{Corollary}
\newtheorem{definition}[theorem]{Definition}
\theoremstyle{remark}
\newtheorem{remark}[theorem]{Remark}
\newtheorem{problem}[theorem]{Problem}

\newcommand{\R}{\mathbb R}

\newcommand{\Span}{\operatorname{span}}
\newcommand{\relint}{\operatorname{relint}}
\newcommand{\St}{\operatorname{St}}
\newcommand{\pos}{\operatorname{pos}}
\newcommand{\Ext}{\operatorname{Ext}}
\newcommand{\PP}{\mathbb P}

\title[Affine anchors in dimension three]
{Affine Anchors and Cylinder Obstructions in the Three-Dimensional Tingley Problem}
\author{Yicen Ma}
\date{September 15, 2026}

\begin{document}

\begin{abstract}
Let $X$ be a three-dimensional real Banach space and let
$f:S_X\to S_Y$ be a surjective isometry.  We study the propagation of
an affine formula for $f$ on a relatively open part of $S_X$.  A
finite family of antipodal distance coordinates propagates a linear
anchor except at three explicitly described degeneracies: a facet, an
open face-star, or a family of chord cones with a common cylindrical
kernel.  We prove that, if the norm has no fixed-direction cylindrical
open cone, every affine open anchor is linear and global.  This yields
a source-geometric criterion for the Mazur--Ulam property and covers,
among other non-strictly-convex examples, the Euclidean double cone.
We also give a segment-saturation criterion which proves the property
for every prism $Z\oplus_\infty\R$, where $Z$ is an arbitrary real
Banach plane.  Finally, in the remaining cylindrical regime, we prove
that a chord-saturated same-kernel network cannot be confined to one
proper projective quotient arc.  We then show that this multi-arc
difficulty and the transverse-ruled part of producing an initial affine
anchor reach the same final obstruction: upgrading norm calibration on
an ambient open cone, together with calibration on a few spherical
segments, to pointwise calibration of the sphere map on a
two-dimensional open patch.
\end{abstract}

\maketitle

\begin{center}
\fbox{\parbox{0.88\textwidth}{\small
\textbf{Draft status.} This manuscript records proof candidates that
have passed internal consistency checks but not an independent proof
audit.  It does not claim the general three-dimensional Tingley
problem.  Section~\ref{sec:remaining} states the unresolved interfaces
explicitly.}}
\end{center}

\section{Introduction}

For a real Banach space $X$, write $B_X$ and $S_X$ for its closed unit
ball and unit sphere.  Tingley's problem asks whether every surjective
isometry
\[
 f:S_X\longrightarrow S_Y
\]
extends to a surjective real-linear isometry $X\to Y$.  A space $X$
has the \emph{Mazur--Ulam property} (MUP) if this is true for every real
Banach target $Y$.

The problem is solved for all real Banach planes
\cite{Banakh2022}, for finite-dimensional polyhedral spaces
\cite{KadetsMartin2012}, and for several further classes.  In dimension
three, strict convexity supports a useful local-to-global mechanism:
three independent distance coordinates can determine a nearby sphere
point.  Flat pieces introduce two difficulties.  First, an isometry
may initially be known only through an affine formula with a translation.
Second, the relevant chord supports may lose rank.  The rank-two loss
is geometric rather than merely algebraic: the norm becomes locally
constant in one fixed direction on an open cone.

This paper isolates that mechanism.  The main conditional result is as
follows.  The definition of a cylindrical open cone is given in
Section~\ref{sec:preliminaries}.

\begin{theorem}[Affine-anchor globalization]\label{thm:main}
Let $X$ be a three-dimensional real Banach space whose norm has no
fixed-direction cylindrical open cone.  Let $Y$ be a real Banach space
and let $f:S_X\to S_Y$ be a surjective isometry.  If there are a
nonempty relatively open set $U\subset S_X$, a linear isomorphism
$A:X\to Y$, and $c\in Y$ such that
\[
 f(x)=Ax+c\qquad(x\in U),
\]
then $c=0$, $A$ is a surjective linear isometry, and
\[
 f(x)=Ax\qquad(x\in S_X).
\]
\end{theorem}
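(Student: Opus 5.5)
The plan has three stages: first remove the translation $c$ on $U$, then show $A$ is an isometry, and finally spread the linear formula from $U$ to all of $S_X$ by an open--closed argument. The propagation step is driven by antipodal distance coordinates, and the no-cylindrical-cone hypothesis is what stops it.

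\emph{Killing the translation.} Shrinking $U$ if needed, we may assume $U$ lies in a small cap around a point $x_0$. The first tool is the standard fact that a surjective isometry between unit spheres satisfies $f(-x)=-f(x)$; this holds for Tingley maps in finite dimensions, and the paper presumably records it among its preliminaries. Then $f(y)=Ay-c$ on $-U$. For $x,y\in U$ we compare
\[
\|f(x)-f(-y)\|=\|A(x+y)+2c\| \quad\text{with}\quad \|x+y\|,
\]
and also compare $\|f(x)-f(y)\|=\|A(x-y)\|$ with $\|x-y\|$. The second family of identities says that $A$ is isometric on the open cone of chord directions $\{x-y:\ x,y\in U\}$. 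Since $U$ is two-dimensional, these differences fill an open set of directions, so $\|Av\|=\|v\|$ on an open cone $C$. To push $c$ out, I would take $y\to x$ along a segment. This gives $\|2Ax+2c\|=2$ for $x\in U$, which is consistent with $\|f(x)\|=1$ but is not yet enough. The real input is the first family: the function $(x,y)\mapsto \|A(x+y)+2c\|-\|x+y\|$ vanishes identically on $U\times U$. Differentiating in $c$-shifted directions, I would show that if $c\neq0$, then the norm of $Y$ restricted to $A(X)+\R c$, pulled back, is constant along direction $A^{-1}c$ (or along a direction determined by $c$ after projection) on an open cone. That is exactly a fixed-direction cylindrical open cone, which contradicts the hypothesis. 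Hence $c=0$.

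\emph{Propagation.} Now $f=A$ on $U$, and $A$ is isometric on an open cone. Let $W$ be the interior of the set of points of $S_X$ near which $f$ agrees with $A$. By antipodality $W=-W$, and $W$ is open and nonempty. For a boundary point $p$ of $W$, I would pick three points $u_1,u_2,u_3\in W\cup(-W)$ such that the distance coordinates $z\mapsto\|z-u_i\|$ determine points near $p$ locally. This is the antipodal distance-coordinate mechanism from the earlier sections. By the earlier propagation result, this fails only at the three listed degeneracies: a facet, an open face-star, or a common cylindrical kernel. The cylindrical case is excluded by hypothesis. In the facet and face-star cases, the linear formula is forced along segments by the affine extension of isometries on flat faces (a Mankiewicz-type argument), so $p\in W$ anyway. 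Since $S_X$ is connected, $W=S_X$, which gives $f=A$ everywhere. Then $A$ is isometric on $S_X$ and hence on $X$, and it is surjective because $f$ is surjective onto $S_Y$ and $A$ is linear.

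The main obstacle is the $c=0$ step. Converting the identity $\|A(x+y)+2c\|=\|x+y\|$ into a genuine cylindrical open cone in $X$, rather than a degeneracy visible only in $Y$, requires care. My first attempt would be a midpoint argument: use $\|f(x)-f(-y)\|$ at pairs with $x+y$ fixed, together with convexity, to show that $t\mapsto\|v+tA^{-1}c\|$ is affine-constant on an open set of $v$.
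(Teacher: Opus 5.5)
Your plan has the same architecture as the paper's proof: oddness and the identity $\|A(x+y)+2c\|_Y=\|x+y\|$ to kill $c$, then an open--closed argument for the set where $f=A$. However, both stages stop short of the decisive step.

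\emph{The $c=0$ step.} Write $Q=\|A\,\cdot\,\|_Y$ and $d=A^{-1}c$, so the identity reads $N(z)=Q(z+2d)$ on $U+U$. Fixing $x+y$ only reproduces this single equation, so your midpoint/convexity plan has no new input. Nor is $Q$ versus $N$ the real issue, since any $d$-invariance of $Q$ near $z+2d$ transfers to $N$ near $z$ through the identity itself. Two things are missing.
\begin{enumerate}
\item You need $U+U$ to contain an ambient open set $O$. This requires $U$ not to be a facet germ; your claim that the chords of a two-dimensional $U$ fill an open cone fails exactly for flat $U$. Excluding facet germs is legitimate here, because a facet germ is itself a fixed-direction cylinder. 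Then two smooth points of $U$ with distinct tangent planes, plus a local degree argument, give $O$.
\item The key idea is positive homogeneity of both norms. At a.e.\ $z\in O$ both sides are differentiable with a common derivative $p_z$. Euler's identity gives $p_z(z)=N(z)=Q(z+2d)=p_z(z+2d)$, so $DN(z)(d)=0$ a.e.\ on $O$, hence on $\pos O$. Lemma~\ref{lem:acl} then yields a cylinder of the source norm in direction $d$, so $c=0$. A derivative-free version also works: comparing $N(\lambda z)=Q(\lambda z+2d)$ with $\lambda N(z)=Q(\lambda z+2\lambda d)$ for $\lambda$ near $1$ shows directly that $Q$, hence $N$, is locally invariant under small shifts along $d$.
\end{enumerate}

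\emph{The face-star case of propagation.} This case is not settled by affineness on segments. If the signed anchor lies in $\St_X(p)$, affineness on maximal convex sets gives $f=A$ only on the segments $[p,u]$ with $u$ in the anchor. Their union is in general a sector at $p$ (think of the vertex of the double cone), not a neighbourhood. Nearby points of $\St_X(p)$ off that sector, and all nearby $x$ with $N(x+p)<2$, remain uncontrolled, so ``$p\in W$ anyway'' is unjustified. (The facet alternative, by contrast, is simply excluded by hypothesis.)

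The paper closes this case in two steps. First, the support-rank decomposition over the star patch (Proposition~\ref{prop:face-star}) shows that, absent facets and cylinders, $p$ is a full-support point. Its Kadets--Mart\'in tangent isomorphism $T_p$ satisfies $f=T_p$ on all of $\St_X(p)$, and therefore $T_p=A$. Second, Proposition~\ref{prop:full-support} uses the chord-support correspondence to show that $f(x)-T_px$ lies in, and has derivative into, the kernel line common to the two target chord supports at $f(x)\pm f(p)$. A nonzero value then forces that line to be locally constant, which yields a fixed-direction cylinder. Some argument of this kind is indispensable for openness of the fixed set at face-star points.
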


An initial anchor can sometimes be read directly from the source ball.
For $e\in S_X$, put
\[
 \St_X(e)=\{u\in S_X:\|e+u\|=2\}.
\]

\begin{corollary}[Open-star criterion]\label{cor:open-star}
Suppose $X$ is three-dimensional, its norm has no fixed-direction
cylindrical open cone, and
\[
 \operatorname{int}_{S_X}\St_X(e)\ne\varnothing
\]
for some $e\in S_X$.  Then $X$ has the Mazur--Ulam property.
\end{corollary}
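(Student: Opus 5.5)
Corollary~\ref{cor:open-star} follows from Theorem~\ref{thm:main} once we produce an affine anchor. So the plan is to build, from $\operatorname{int}_{S_X}\St_X(e)\neq\varnothing$, a nonempty relatively open $U\subset S_X$, a linear isomorphism $A:X\to Y$ and a vector $c\in Y$ with $f=A+c$ on $U$. Theorem~\ref{thm:main} then gives $f=A|_{S_X}$ with $A$ a surjective linear isometry, which is the Mazur--Ulam property for $X$.

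\textbf{Step 1: transport the star.} Since $f$ is a surjective isometry of unit spheres, it maps $-e$ to $-f(e)$ (the standard fact $f(-x)=-f(x)$ for Tingley maps). For $u\in S_X$ we have $u\in\St_X(e)$ iff $\|u-(-e)\|=2$. Applying the isometry, this holds iff $\|f(u)+f(e)\|=2$. Hence $f(\St_X(e))=\St_Y(f(e))$.

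The star $\St_X(e)$ is the union of the faces of $B_X$ that meet the maximal face $F_e=\{x\in S_X:\|x+e\|=2\}$ antipodally arranged. More concretely, it consists of those $u$ for which some norming functional $\varphi$ satisfies $\varphi(e)=\varphi(u)=1$. In other words, $\St_X(e)=\bigcup_{\varphi\in D(e)}\varphi^{-1}(1)\cap B_X$, where $D(e)$ is the set of supporting functionals at $e$.

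If the star has nonempty interior in $S_X$, then, since $D(e)$ is a compact convex set and each face $\varphi^{-1}(1)\cap B_X$ is a closed convex set, a Baire-category argument gives a single face $F=\varphi^{-1}(1)\cap B_X$ with nonempty relative interior in $S_X$. That is, $F$ is a two-dimensional facet.

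\textbf{Step 2: the facet is mapped affinely.} By the known results on maximal faces (Cheng--Dong, Tanaka: a surjective isometry maps maximal convex subsets of $S_X$ onto maximal convex subsets of $S_Y$, and its restriction to such a face is affine), the restriction $f|_F$ is an affine isometry of $F$ onto a maximal face $G\subset S_Y$. Choose $x_0\in\relint F$ and write $f(x)=f(x_0)+L(x-x_0)$ on $F$, where $L$ is linear on the two-dimensional plane $\ker\varphi$.

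Extend this to $A:X\to Y$ by putting $Ax_0=f(x_0)$ and $A=L$ on $\ker\varphi$. This gives $f(x)=Ax$ on $F$, so we may take $c=0$. Then $A$ is injective, because $f(x_0)\notin \operatorname{span} L(\ker\varphi)$: indeed $G$ lies in a supporting hyperplane $\psi=1$ with $\psi\circ L=0$ on $\ker\varphi$. Taking $U=\relint F$ gives the anchor. Since $A$ need not be onto $Y$ a priori, we replace $Y$ by $A(X)$ if Theorem~\ref{thm:main} requires an isomorphism onto $Y$; surjectivity onto $Y$ is then recovered from the conclusion $f(S_X)=A(S_X)=S_Y$.

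\textbf{Main obstacle.} The main obstacle is Step 1's reduction from ``open star'' to ``open facet''. One must rule out the star being open while every individual face is a lower-dimensional set, for instance a continuum of segments as in a cone. By Baire's theorem some closure of a countable subfamily has interior, but $D(e)$ is uncountable, so the argument has to be made differently.

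I would first try to show directly that the map $u\mapsto$ the set of functionals norming both $e$ and $u$ has a local selection. If interior points are only swept out by a one-parameter family of segments, those segments force the norm to be constant along a fixed direction, which is exactly a fixed-direction cylindrical open cone and is excluded by hypothesis. If this reduction fails, the fallback is to invoke the paper's own propagation result for open face-stars, one of the listed degeneracies, to produce the affine formula on the star directly.
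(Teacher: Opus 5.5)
\textbf{There is a genuine gap: Step~1 cannot be completed, and the reason is stronger than the Baire difficulty you flag.} Under the hypotheses of the corollary, the object Step~1 aims for does not exist.

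\emph{No facet can exist.} Suppose $F\subset B_X\cap\{p=1\}$ were a two-dimensional facet. Then $N=p$ on the open cone $\pos(\relint F)$, and any $0\ne k\in\ker p$ would be a fixed cylindrical direction there. So the no-cylinder hypothesis already excludes facets, and Step~2 is vacuous.

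\emph{Concrete example.} Take $X=\R\oplus_1\ell_2^2$ and $e=(1,0)$. Every $u=(t,v)\in S_X$ with $t\ge0$ satisfies $\|e+u\|=1+t+\|v\|_2=2$, so the whole upper conical surface lies in $\St_X(e)$. Yet each maximal face through $e$ is only a segment from $e$ to the rim circle. This is the paper's own example for this corollary (Corollary~\ref{cor:double-cone}).

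\emph{Your proposed repair also fails.} Segments sweeping an open patch need not produce a fixed cylindrical direction when they share an endpoint and rotate about it, as in the example. In fact, Proposition~\ref{prop:face-star} with $x=e$ and $V=\operatorname{int}_{S_X}\St_X(e)$ shows that once facets and cylinders are excluded, $e$ must be a full-support point. Your facet reduction works only when $e$ is smooth, where $\St_X(e)=B_X\cap\{p_e=1\}$ is a single face and no Baire argument is needed. That case is ruled out here.

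\textbf{What is missing is a way to obtain a formula for $f$ on a non-flat open set.} The paper gets it from the Kadets--Mart\'in tangent construction at the non-smooth point $e$. This gives a linear isomorphism $T_e:X\to Y$ and a defect $\delta_e$ with $f(u)=T_e(u-\delta_e)$ for all $u\in\St_X(e)$ (Proposition~\ref{prop:anchor-source}(2)). That is an affine open anchor on the whole star, flat or not. It is also why Theorem~\ref{thm:main} allows a translation $c$, which Proposition~\ref{prop:translation} then removes. Here, full support even gives $\delta_e=0$ by Proposition~\ref{prop:anchor-source}(3).

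Your fallback of invoking the propagation results cannot replace this step. Those results start from an already existing linear anchor, and Proposition~\ref{prop:face-star} is purely geometric and says nothing about $f$.

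\textbf{Minor point.} Rather than replacing $Y$ by $A(X)$, which presupposes $f(S_X)\subset A(X)$, observe that $S_Y=f(S_X)$ is a compact topological $2$-sphere. Hence $\dim Y=3$, and an injective linear $A:X\to Y$ is automatically an isomorphism.
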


The criterion is not a disguised strict-convexity assumption.  For
example, the norm
\[
 \|(t,u)\|=|t|+\|u\|_2
 \qquad(t\in\R,\ u\in\R^2)
\]
has a sphere ruled by line segments from its two vertices.  Its ruling
direction changes along every open part of the conical surface, so no
single nonzero direction cylindricalizes an open cone.  Corollary
\ref{cor:open-star} applies.

There is also a complementary global mechanism which tolerates a full
cylinder.

\begin{theorem}[Opposite-facet prism criterion]\label{thm:prism}
Let $X$ be a finite-dimensional real Banach space.  If $F$ is a facet
of $B_X$ and
\[
 B_X=\operatorname{conv}(F\cup(-F)),
\]
then $X$ has the Mazur--Ulam property.  In particular, every
$Z\oplus_\infty\R$ with $\dim Z=2$ has the Mazur--Ulam property.
\end{theorem}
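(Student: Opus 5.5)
We prove Theorem~\ref{thm:prism} with the classical facet machinery for finite-dimensional spaces (Kadets--Mart\'in style), reducing everything to a linear formula on the two opposite facets $F$ and $-F$. The hypothesis $B_X=\operatorname{conv}(F\cup(-F))$ says that every point of $S_X$ lies on a segment $[x,-y]$ with $x,y\in F$. The plan is therefore to obtain a linear formula on $F\cup(-F)$ and then propagate it along these segments, using that $f$ maps segments of the sphere to segments.

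\emph{Step 1: facets go to faces.} We use the standard facts for surjective isometries $f:S_X\to S_Y$ with $X$ finite-dimensional. Maximal convex subsets of $S_X$ are mapped onto maximal convex subsets of $S_Y$ (Tanaka; Cheng--Dong). Moreover $f(-x)=-f(x)$ on such faces, so $f(-F)=-f(F)$. For a facet $F$ with supporting functional $\varphi$ (so $F=\{\varphi=1\}\cap B_X$), the image $f(F)$ is a maximal face of $B_Y$. The map $f|_F$ is then a surjective isometry between the convex sets $F$ and $f(F)$, in the metric inherited from the norms.

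\emph{Step 2: affine formula on $F$.} $F$ has nonempty interior in the hyperplane $\varphi=1$, which has dimension $\dim X-1$. By Mankiewicz's theorem, the isometry between convex bodies $F$ and $f(F)$ extends to an affine isometry between their affine hulls. So $f(x)=Tx+c$ on $F$ with $T$ linear and $c\in Y$. Writing $x=e+h$ with $e\in\relint F$ and $\varphi(h)=0$, the formula on $F$ together with $f(-x)=-f(x)$ on $-F$ determines a unique linear map $A$ with $A|_{\ker\varphi}=T$ and $Ae=Te+c$. Then $f=A$ on $F\cup(-F)$.

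\emph{Step 3: propagation.} Take $z\in S_X\setminus(F\cup-F)$. By hypothesis $z=\lambda x+(1-\lambda)(-y)$ with $x,y\in F$ and $\lambda\in(0,1)$. The whole segment $[x,-y]$ lies in $S_X$ because $z\in S_X$ and $B_X$ is convex. The distances satisfy
\[
\|z-x\|+\|z+y\|=\|x+y\|=\|f(x)+f(y)\|.
\]
Since $f(x)=Ax$ and $f(-y)=-Ay$, the point $f(z)$ is a metric midpoint-type point between $Ax$ and $-Ay$ with the prescribed proportions. We must show $f(z)=Az$, i.e.\ that $f(z)$ lies on the segment itself and not merely in the metric segment.

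This is the main obstacle, because metric segments in non-strictly-convex $Y$ need not be unique. To force uniqueness we vary $x,y$. A given $z$ admits many representations: a whole family of segments through $z$ joining $F$ to $-F$, since $F$ has full relative dimension. Each representation gives the same distance equations. We also compare $f(z)$ with $f(w)=Aw$ for all $w\in F\cup(-F)$, since $\|f(z)-Aw\|=\|z-w\|$. We then use Step~1 applied to the maximal face of $S_X$ containing $z$: that face is mapped onto a face, and together with $f(-z)=-f(z)$ this yields $\|f(z)+Aw\|=\|z+w\|$ as well. Evaluating norming functionals of $Y$ that support $f(F)$ (namely $\psi$ with $\psi\circ A=\varphi$) pins down $\psi(f(z))=\varphi(z)$. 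The distances to $A(F)$, measured in the direction transverse to $\ker\varphi$, then pin down the component of $f(z)$ in $A(\ker\varphi)$. Once $f=A$ on $S_X$, $A$ is an isometry onto its image. Surjectivity of $f$ gives $A(S_X)=S_Y$, so $A$ is onto, hence a surjective linear isometry.

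For $Z\oplus_\infty\R$ with $\dim Z=2$, take $F=B_Z\times\{1\}$. Then $\operatorname{conv}(F\cup -F)=B_Z\times[-1,1]=B_X$, so the general statement applies.
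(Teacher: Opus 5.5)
Your Steps 1--2 and the representation $z=\lambda x+(1-\lambda)(-y)$ with $[x,-y]\subset S_X$ are correct and agree with the paper. Step~3, however, has a genuine gap, exactly at the point you identify as the main obstacle.

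Part of your substitute argument can be made to work. If $\psi$ exposes the face $f(F)$, then $\psi\circ A=\varphi$. The bounds $\psi(f(z))-1\ge-\|z-x\|=-(1-\lambda)\|x+y\|$ and $\psi(f(z))+1\le\|z+y\|=\lambda\|x+y\|$, together with $\|x+y\|=2$, give $\psi(f(z))=\varphi(z)$. So the transverse coordinate is recovered. The decisive claim, though, is that the distances to $A(F\cup(-F))$ ``pin down'' the $A(\ker\varphi)$-component of $f(z)$, and you give no argument for it. This is precisely where non-uniqueness of metric segments in a non-strictly-convex $Y$ causes trouble. Moreover, at this stage you do not know that $A$ is an isometry. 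So you cannot even verify that $Az$ itself has the prescribed distances to $A(F\cup(-F))$, let alone that it is the only point of $S_Y$ that does. Varying the representation of $z$ only adds more equations of the same kind.

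The missing idea is that no metric uniqueness is needed: apply the machinery of your Steps~1--2 to the segment, not only to $F$.

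Take $q\in J_X(z)$. Then $1=q(z)=\lambda q(x)+(1-\lambda)q(-y)$ forces $q(x)=q(-y)=1$. Hence $[x,-y]$ lies in the convex subset $B_X\cap\{q=1\}$ of $S_X$, and so in some maximal convex subset $M$ of $S_X$. By Tanaka's theorem, $f(M)$ is a maximal convex subset of $S_Y$, and by Mankiewicz's theorem $f|_M$ is affine. Therefore
$f(z)=\lambda f(x)+(1-\lambda)f(-y)=\lambda Ax-(1-\lambda)Ay=Az$.

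This is the paper's segment-saturation step (Proposition~\ref{prop:saturation}). With it inserted, the rest of your proof goes through, including surjectivity of $A$ and the $Z\oplus_\infty\R$ example.
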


The paper ends at the exact remaining boundary.  A calibrated local
cylinder may project to an arc of the unit circle of a two-dimensional
quotient.  We show that adding the ordinary chord cones generated at a
calibrated zero-width endpoint forces the network outside any one
proper projective arc.  The new cones initially carry equality of the
two ambient norms; they do not automatically carry equality of the
sphere maps.  Section~\ref{sec:remaining} shows that the
transverse-ruled route to an initial anchor produces exactly the same
type of data.  Thus the two proof routes meet at one calibration-upgrade
problem rather than ending in two unrelated analytic gaps.

\section{Metric and convex preliminaries}\label{sec:preliminaries}

All spaces are real.  If $N$ is a norm on a finite-dimensional vector
space $X$ and $x\ne0$, let
\[
 J_N(x)=\{p\in X^*:N^*(p)=1,\ p(x)=N(x)\}
\]
be the set of norming functionals.  At a smooth point it is a singleton
and we write $DN(x)$ for its member.  We use the Euler identity
\[
 DN(x)(x)=N(x)
\]
at differentiability points.

We use four standard inputs.  A surjective sphere isometry in finite
dimensions is odd \cite{Tingley1987}.  It preserves maximal convex
subsets of the sphere \cite{Tanaka2014}.  On such a finite-dimensional
convex body it is affine by Mankiewicz's theorem
\cite{Mankiewicz1972}.  Finally, the tangent and chord-support
correspondence of Kadets and Mart\'in associates compatible linear
maps to non-smooth base points and pairs the norming functionals of
ordinary source and target chords \cite{KadetsMartin2012}.  We state
the exact consequences when they are used.

\begin{definition}[Open anchor]
An \emph{affine open anchor} for $f:S_X\to S_Y$ is a nonempty relatively
open $U\subset S_X$ on which
\[
 f(x)=Ax+c
\]
for a linear isomorphism $A:X\to Y$ and a fixed $c\in Y$.  It is a
\emph{linear open anchor} when $c=0$.
\end{definition}

\begin{definition}[Fixed-direction cylindrical open cone]
A norm $N$ on $X$ has a fixed-direction cylindrical open cone if there
are a nonzero $k\in X$ and a nonempty open cone
$\Gamma\subset X\setminus\{0\}$ such that
\begin{equation}\label{eq:cylinder-def}
 N(z+tk)=N(z)
\end{equation}
whenever the segment $[z,z+tk]$ is compactly contained in $\Gamma$.
The vector $k$ is fixed throughout $\Gamma$.
\end{definition}

Every facet germ produces such a cone: on a smaller cone the norm is a
single supporting functional $p$, and any nonzero $k\in\ker p$ works.
The converse is not asserted.

We repeatedly use the following elementary analytic observation.

\begin{lemma}[Almost-everywhere annihilation]\label{lem:acl}
Let $O\subset X$ be open, let $N$ be a norm, and let $0\ne k\in X$.
If $DN(z)(k)=0$ for almost every $z\in O$, then
\[
 N(z+tk)=N(z)
\]
whenever $[z,z+tk]$ is compactly contained in $O$.
\end{lemma}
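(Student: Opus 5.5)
The plan is to reduce the statement to a one-dimensional absolute-continuity argument along lines parallel to $k$, combined with Fubini. A norm $N$ on a finite-dimensional space is Lipschitz, hence differentiable almost everywhere by Rademacher's theorem. Moreover, the set $E\subset O$ of points where $N$ is differentiable and $DN(z)(k)=0$ has full measure in $O$.

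Fix a segment $[z,z+tk]$ compactly contained in $O$; without loss of generality $t>0$. By compactness there is $\varepsilon>0$ such that $[w,w+tk]\subset O$ whenever $\|w-z\|<\varepsilon$. Choose a complementary hyperplane $H$ to $\R k$ and parametrize the neighbourhood as $w=z+h+sk$ with $h\in H$ small and $s$ in a small interval around $[0,t]$. In these coordinates Lebesgue measure is a constant multiple of product measure on $H\times\R$. By Fubini, for $\mathcal H$-almost every small $h\in H$ the set $\{s:z+h+sk\notin E\}$ has one-dimensional measure zero.

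For such an $h$, consider $g_h(s)=N(z+h+sk)$. This function is Lipschitz, so it is absolutely continuous, and at every $s$ with $z+h+sk\in E$ its derivative is
\[
g_h'(s)=DN(z+h+sk)(k)=0 .
\]
Since this holds for almost every $s$, we get
\[
g_h(t)-g_h(0)=\int_0^t g_h'(s)\,ds=0,
\]
that is, $N(z+h+tk)=N(z+h)$. The good $h$ form a set of full measure, so they are dense near $0$. Letting $h\to0$ through good values and using continuity of $N$ then gives $N(z+tk)=N(z)$.

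The only delicate point is the Fubini step: one must ensure that the \emph{entire} segment near $[z,z+tk]$, rather than just a.e.\ points, lies in $O$ for the perturbed lines. The compact-containment hypothesis provides exactly the uniform $\varepsilon$ needed for this. Everything else is standard: Rademacher's theorem, the fundamental theorem of calculus for Lipschitz functions, and continuity of the norm.
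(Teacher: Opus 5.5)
Your proof is correct and follows essentially the same route as the paper: Rademacher's theorem and Fubini's theorem in coordinates adapted to $k$ give a vanishing derivative almost everywhere on almost every parallel line, the restricted Lipschitz functions are absolutely continuous and hence constant, and good parallel segments approximate the given one by continuity of $N$. Your compactness choice of $\varepsilon$, which keeps the perturbed segments inside $O$, plays the role of the paper's ``rectangular box compactly contained in $O$.''
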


\begin{proof}
In a rectangular box compactly contained in $O$, use $k$ as one
coordinate direction.  Rademacher's theorem and Fubini's theorem show
that the restriction of $N$ to almost every parallel line has
derivative zero almost everywhere.  The restrictions are absolutely
continuous, hence constant.  Approximation by those full-measure lines
and continuity of $N$ give the assertion on every compactly contained
parallel segment.
\end{proof}

\section{Producing an initial affine anchor}\label{sec:initial}

We first record a source-geometric entrance condition.  This is useful
both for Corollary~\ref{cor:open-star} and for locating the part of the
general problem which remains beyond the present method.

\begin{proposition}[Facet and open-star anchors]\label{prop:anchor-source}
Let $X$ be three-dimensional and let $f:S_X\to S_Y$ be a surjective
isometry.
\begin{enumerate}
\item If $B_X$ has a two-dimensional facet, then $f$ has a linear open
anchor.
\item If $\St_X(e)$ has nonempty relative interior in $S_X$, then $f$
has an affine open anchor.
\item In (2), the anchor is linear if
$\Span J_X(e)=X^*$, or if
$\St_X(e)\cap\St_X(-e)\ne\varnothing$.
\end{enumerate}
\end{proposition}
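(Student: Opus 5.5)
The plan is to combine three standard inputs: Tanaka's preservation of maximal convex subsets, Mankiewicz's theorem, and oddness from Tingley. Then I will read off linearity from the geometry of the relevant face.

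\emph{Part (1).} Let $F$ be a two-dimensional facet of $B_X$; it is a maximal convex subset of $S_X$. By \cite{Tanaka2014}, $f(F)$ is a maximal convex subset of $S_Y$. By Mankiewicz, $f|_F$ is the restriction of an affine isometry $T$ of $\operatorname{aff}F$ onto $\operatorname{aff}f(F)$. Since $0\notin\operatorname{aff}F$ (the supporting functional $p$ equals $1$ on $F$), the affine map $T$ extends uniquely to a linear map $A$ on $X=\Span F$. Explicitly, $A(\lambda x)=\lambda T(x)$ for $x\in\operatorname{aff}F$, and this is well defined because $0\notin\operatorname{aff}f(F)$ either, $f(F)$ being a face of $B_Y$. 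Injectivity of $A$ also follows from $0\notin\operatorname{aff}f(F)$, since $\operatorname{aff}f(F)$ is two-dimensional. Hence $f=A$ on $\relint F$, which is relatively open in $S_X$, so this is a linear open anchor.

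\emph{Part (2).} For $u\in\St_X(e)$ we have $\|e+u\|=2$, so $[e,u]\subset S_X$, and $e,u$ lie in a common maximal face $F_u$. I will take a point $u_0$ in the interior of $\St_X(e)$. Near $u_0$, the faces $F_u$ cover an open patch, and I expect to pick a single face $F$, containing $e$, with nonempty relative interior in $S_X$. The argument is that the union of the faces through $e$ covers an open set, and any open set in the $2$-manifold $S_X$ covered by the convex faces through $e$ must contain a two-dimensional face. A Baire argument over the closed family of faces through $e$ should give this, using that the segments $[e,u]$ sweep out the patch. Then Mankiewicz applied to $f|_F$ gives an affine formula $f(x)=Ax+c$ on $\relint F$. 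Extending the linear part of the affine map to $X$ requires a third direction; I would take $A$ linear on $\Span(F-F)$ and extend it using $f(e)$ and oddness. This yields an affine open anchor. \textbf{Main obstacle:} if $F$ is genuinely two-dimensional we are back in (1), with $c=0$. So the real content must be a lower-dimensional situation in which $\St_X(e)$ is open while each face through $e$ is only a segment, as on the double cone. In that case I would use Kadets--Mart\'in at the base point $e$. The compatible linear map $A$ attached to $J_X(e)$ agrees with $f$ on each segment $[e,u]$ up to the translation fixed by $f(e)=Ae+c$. Since the segments fill an open set, this gives the formula on $\operatorname{int}\St_X(e)$.

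\emph{Part (3).} If $\Span J_X(e)=X^*$, the norming functionals of $f(e)$ are paired with those of $e$ by the chord-support correspondence. The paired functionals satisfy $q(f(x))=p(x)$ for $x$ on a face through $e$, and the relation $q\circ A=p$ from the correspondence then gives $q(c)=0$ for a spanning family, so $c=0$. If instead $w\in\St_X(e)\cap\St_X(-e)$, then $w$ is joined by segments to both $e$ and $-e$. Oddness gives $f(-x)=-f(x)$, and on the star of $-e$ the map is $-Ax-c$ applied to $-x$, that is, $Ax-c$. Comparing the two formulas at $w$ (and nearby points along the segments) gives $c=-c$, so $c=0$.
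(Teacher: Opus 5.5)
Your final route is the paper's. In (1) you homogenize the Mankiewicz affine map on the facet; the paper writes this as $L(v+tx_0)=Bv+tf(x_0)$ with $v\in\ker p$. In (2) you either fall back on (1) or invoke the Kadets--Mart\'in tangent map at $e$, which gives $f(u)=T_e(u-\delta_e)$ on $\St_X(e)$. In (3) you use the tangent theorem in the full-support case, and in the other case you compare the formulas on $\St_X(e)$ and $\St_X(-e)=-\St_X(e)$ using oddness. Two things need repair, though.

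First, the opening of your part (2) is false. It should be deleted, not left beside the ``main obstacle'' remark that contradicts it. An open subset of $S_X$ covered by faces through $e$ need not contain a two-dimensional face. In $\R\oplus_1\ell_2^2$ with $e=(1,0)$, the open upper conical surface is swept out by the segments $[e,(0,b)]$, $\|b\|_2=1$, and $B_X$ has no facet at all. No Baire argument can rescue this, because the family of faces through $e$ is in general uncountable (here a whole circle of segments). Compare Proposition~\ref{prop:countable}, where Baire applies only because full-support points are countable.

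Second, your case split needs one more sentence to license the tangent map. You split into ``some face through $e$ is two-dimensional'' versus ``every face through $e$ is a segment,'' but the tangent construction is attached to non-smooth base points. The paper splits by smoothness instead. If $e$ is smooth with support $p_e$, equality in $\|e+u\|=2$ gives a common support of $e$ and $u$, which must be $p_e$. Hence $\St_X(e)\subset\{p_e=1\}$, its open part lies in a facet, and (1) applies. Contrapositively, in your second case $e$ is non-smooth, so $T_e$ is available. With that sentence added and the Baire paragraph removed, your argument is the paper's.

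Minor points:
\begin{itemize}
\item In (1), the formula $A(\lambda x)=\lambda T(x)$ is well defined because $0\notin\operatorname{aff}F$, not because $0\notin\operatorname{aff}f(F)$.
\item It still has to be extended to $\ker p$ by the linear part of $T$.
\item Invertibility of $A$ uses $0\notin\operatorname{aff}f(F)$ together with $\dim Y=3$.
\item In (3), you need the affine formula on all of $\St_X(e)$, not only on its interior, since the common point $w$ and $-w$ need not be interior. Your segment-by-segment description does give this, so state it that way.
\end{itemize}
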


\begin{proof}
Let $F=B_X\cap\{p=1\}$ be a facet.  The restriction of $f$ to $F$ is
affine.  Choose $x_0\in\relint F$, put
$H=\Span(F-F)=\ker p$, and write
\[
 f(x_0+v)=f(x_0)+Bv\qquad(v\in H,\ x_0+v\in F).
\]
The target face has direction space $H'$, and
$Y=H'\oplus\R f(x_0)$.  Thus
\[
 L(v+tx_0)=Bv+t f(x_0)
\]
defines a linear isomorphism with $f=L$ on $F$.  The relative interior
of a facet is relatively open in $S_X$, proving (1).

For (2), assume first that $e$ is smooth, with unique support $p_e$.
If $u\in\St_X(e)$, equality in the triangle inequality supplies a
common support of $e$ and $u$, which must be $p_e$.  An open part of
$\St_X(e)$ therefore lies in the facet $\{p_e=1\}$, and (1) applies.

If $e$ is non-smooth, the finite-dimensional tangent construction in
\cite{KadetsMartin2012} gives a linear isomorphism $T_e:X\to Y$ and a
canonical defect
\[
 \delta_e=e-T_e^{-1}f(e)
\]
such that
\begin{equation}\label{eq:star-affine}
 f(u)=T_e(u-\delta_e)\qquad(u\in\St_X(e)).
\end{equation}
This gives an affine open anchor.  If the supports at $e$ span $X^*$,
the same tangent theorem gives $T_e e=f(e)$, hence $\delta_e=0$.  If
the positive and negative stars meet, their two formulas have defects
$\delta_e$ and $-\delta_e$; oddness forces them to agree at an
intersection point, again giving $\delta_e=0$.
\end{proof}

Full-support points are scarce enough for a Baire argument.

\begin{proposition}[Countability of full-support points]\label{prop:countable}
The set
\[
 V_3=\{e\in S_X:\Span J_X(e)=X^*\}
\]
is at most countable.  If a nonempty relatively open set
$W\subset S_X$ is covered by $\{\St_X(e):e\in V_3\}$, then one of these
stars has nonempty relative interior and hence produces a linear open
anchor.
\end{proposition}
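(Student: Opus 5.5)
The plan has two parts: show that $V_3$ is countable, then apply Baire's theorem to the covering.

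\emph{Countability.} For $e\in V_3$ the set $J_X(e)$ spans $X^*$, so it contains three linearly independent functionals $p_1,p_2,p_3$ of dual norm one with $p_i(e)=1$. Hence $e$ is the unique solution of $p_1(x)=p_2(x)=p_3(x)=1$. Equivalently, the normal cone of $B_X$ at $e$ is a full three-dimensional cone $\pos J_X(e)\subset X^*$, and it has nonempty interior.

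These cones have disjoint interiors for distinct points. If $q$ lies in the interior of both the cone at $e$ and the cone at $e'$, then $q/\|q\|^*$ attains its maximum over $B_X$ at both $e$ and $e'$. The exposed face $\{x\in B_X:q(x)=\|q\|^*\}$ then contains the segment $[e,e']$. A small perturbation $q+\epsilon r$ stays in both interiors for every $r$. Choosing $r$ with $r(e)\ne r(e')$ then gives a contradiction, because both points must be maximizers of $q+\epsilon r$.

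A family of pairwise disjoint open subsets of $X^*\cong\R^3$ is at most countable, since each member contains a rational point. Hence $V_3$ is at most countable.

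\emph{Baire step.} Each star $\St_X(e)=\{u\in S_X:\|e+u\|=2\}$ is closed in $S_X$ by continuity of the norm. The open set $W$ is a nonempty locally compact Hausdorff space, hence a Baire space. It is covered by the countably many relatively closed sets $\St_X(e)\cap W$ with $e\in V_3$. By Baire's theorem one of them has nonempty interior in $W$. Since $W$ is relatively open in $S_X$, that interior is relatively open in $S_X$, so $\operatorname{int}_{S_X}\St_X(e)\ne\varnothing$ for some $e\in V_3$.

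Proposition~\ref{prop:anchor-source}(2) now gives an affine open anchor from this star. Since $\Span J_X(e)=X^*$, part (3) of the same proposition shows the anchor is linear.

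The main obstacle is the disjointness of the normal-cone interiors. I would state it through exposed faces as above, or equivalently by noting that the dual norm is differentiable on the interior of each full normal cone with gradient $e$. That differentiability forces distinct points to have disjoint interiors.
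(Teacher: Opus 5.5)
Your proof is correct and follows essentially the same route as the paper. The paper also notes that the open cone generated by $\relint J_X(e)$ consists of functionals uniquely exposing $e$, so these cones are pairwise disjoint and hence at most countably many; it then applies Baire's theorem to the closed cover of $W$ and invokes Proposition~\ref{prop:anchor-source}(3). Your perturbation argument with $q+\epsilon r$ simply spells out the unique-exposure step that the paper states without proof.
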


\begin{proof}
For $e\in V_3$, the cone generated by $\relint J_X(e)$ is a nonempty
open subset of $X^*$.  Every functional in it uniquely exposes $e$.
The corresponding open cones are disjoint for distinct $e$.  A
second-countable space contains only countably many disjoint nonempty
open sets, proving countability.  Each star is closed in $S_X$.
Baire's theorem applied to the countable closed cover of $W$ makes one
star have interior.  Proposition~\ref{prop:anchor-source}(3) applies.
\end{proof}

\section{Propagation from a linear open anchor}\label{sec:propagation}

Assume now that $f=L$ on a nonempty relatively open $U\subset S_X$.
Put
\[
 Q(x)=\|Lx\|_Y,\qquad g=L^{-1}f:S_N\longrightarrow S_Q,
\]
where $N$ is the source norm.  Then $g$ fixes $U\cup(-U)$.  Let
\[
 \mathcal F=\{x\in S_N:g(x)=x\}.
\]

Fix $x\in\mathcal F$.  If $A$ is a relatively open part of $U$ or
$-U$ satisfying $N(x+a)<2$ for $a\in A$, define
\[
 \rho_{x,A}(a)=\frac{x-a}{N(x-a)},
 \qquad
 \Gamma(x,A)=\pos\rho_{x,A}(A).
\]

\begin{lemma}[Strict chord cones]\label{lem:strict-cone}
$\rho_{x,A}$ is an open embedding into $S_N$, and $\Gamma(x,A)$ is an
ambient open cone.  Moreover,
\begin{equation}\label{eq:two-norm-calibration}
 Q=N\qquad\hbox{on }\Gamma(x,A).
\end{equation}
\end{lemma}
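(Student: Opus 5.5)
The plan is to derive the calibration \eqref{eq:two-norm-calibration} directly from the isometry property at fixed points, and to prove the embedding claim with a collinearity argument on the convex body $B_N$ together with invariance of domain.

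\emph{Calibration.} First I would check that every $a\in A$ is a fixed point of $g$. If $A\subset U$ this is the definition of $U$. If $A\subset -U$, I would use the oddness of surjective sphere isometries in finite dimensions \cite{Tingley1987}: $f(-u)=-f(u)=-Lu=L(-u)$, so $g(a)=a$. Also $g(x)=x$ because $x\in\mathcal F$.

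Since $f$ is an isometry and $L$ is linear, $g$ is an isometry from $(S_N,N)$ onto $(S_Q,Q)$. Hence
\[
 Q(x-a)=Q(g(x)-g(a))=N(x-a)\qquad(a\in A).
\]
Both $Q$ and $N$ are positively homogeneous, so $Q=N$ on every ray $\R_{>0}(x-a)=\R_{>0}\rho_{x,A}(a)$. This gives $Q=N$ on $\pos\rho_{x,A}(A)=\Gamma(x,A)$ once $\Gamma(x,A)$ is known to be an ambient open cone.

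\emph{Well-definedness and continuity of $\rho_{x,A}$.} If $a=x$, then $N(x+a)=2$, which the hypothesis excludes. So $N(x-a)>0$ on $A$, and $\rho_{x,A}$ is continuous with values in $S_N$.

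\emph{Injectivity.} Suppose $\rho_{x,A}(a)=\rho_{x,A}(b)$ with $a\ne b$. Then $x-a=\lambda(x-b)$ for some $\lambda>0$, $\lambda\ne1$. Thus $x,a,b$ are three distinct collinear points of $S_N$, with $a$ and $b$ on the same side of $x$. Consequently one of $a,b$ lies strictly between $x$ and the other.

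I would then use the elementary convexity fact that if a point of the open segment $(y,z)$ between two points of $B_N$ has norm $1$, then the whole segment $[y,z]$ lies in $S_N$. Applied here, the segment from $x$ to the farther of $a,b$ lies in $S_N$, so $[x,a]\subset S_N$. In particular the midpoint has norm $1$, i.e. $N(x+a)=2$, contradicting the hypothesis on $A$.

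\emph{Openness.} $A$ is a relatively open subset of the topological $2$-sphere $S_N$, and $\rho_{x,A}$ is a continuous injection into another $2$-manifold. By invariance of domain it is an open map, hence an open embedding.

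Finally, radial projection $X\setminus\{0\}\to S_N\times(0,\infty)$, $z\mapsto(z/N(z),N(z))$, is a homeomorphism. Under it, $\Gamma(x,A)$ corresponds to $\rho_{x,A}(A)\times(0,\infty)$, which is open. So $\Gamma(x,A)$ is an ambient open cone, and the calibration above holds on all of it.

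The only step requiring care is the injectivity argument. Its key input is the observation that the hypothesis $N(x+a)<2$ says exactly that $a\notin\St_N(x)$, which rules out a flat segment of the sphere through $x$ and $a$. Everything else is formal.
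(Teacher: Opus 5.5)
Your proof is correct and follows essentially the same route as the paper. Injectivity comes from the collinear configuration forcing a spherical segment through $x$, which contradicts $N(x+a)<2$; openness comes from invariance of domain on the two-dimensional sphere; and the calibration $Q(x-a)=Q(g(x)-g(a))=N(x-a)$ is extended to $\Gamma(x,A)$ by homogeneity. The extra details you supply are all sound and only make explicit what the paper leaves implicit: that $a\ne x$, that points of $-U$ are fixed by oddness, and the radial homeomorphism showing $\Gamma(x,A)$ is open, with $\pos$ read, as the paper does, as the union of open rays.
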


\begin{proof}
If $\rho_{x,A}(a)=\rho_{x,A}(b)$, then $x-a=\lambda(x-b)$ for some
$\lambda>0$.  Unless $a=b$, one of $a,b$ is a proper convex
combination of the other and $x$.  Since all three have norm one, the
whole connecting segment lies in the sphere and has a common support;
this gives $N(x+a)=2$ or $N(x+b)=2$, a contradiction.  Thus the map is
injective.  Invariance of domain on the two-dimensional norm spheres
makes its image open.  Finally, $x$ and $a$ are fixed, so
\[
 Q(x-a)=Q(g(x)-g(a))=N(x-a),
\]
and homogeneity proves \eqref{eq:two-norm-calibration}.
\end{proof}

Let $\mathcal G_x$ be the span in $X^*$ of all $DN(d)$ at
differentiability points $d$ in all available cones $\Gamma(x,A)$.

\begin{proposition}[Distance-coordinate trichotomy]\label{prop:trichotomy}
At a fixed point $x\in\mathcal F$ one of the following holds.
\begin{enumerate}
\item $\dim\mathcal G_x=3$, and $x$ is an interior point of
$\mathcal F$ in $S_N$.
\item $\dim\mathcal G_x=2$, and all available strict chord cones share
a nonzero fixed cylindrical direction.
\item $\dim\mathcal G_x=1$, and an available chord cone is a facet cone.
\item One sign of the anchor supplies no strict open chord family, in
which case $U$ or $-U$ is contained in $\St_X(x)$.
\end{enumerate}
\end{proposition}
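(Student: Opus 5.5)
The plan is to first dispose of case (4), and then split by $\dim\mathcal G_x$.

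\textbf{Case (4).} Suppose that for one sign, say $U$, no relatively open $A\subset U$ satisfies $N(x+a)<2$ on $A$. The set $\{a\in U: N(x+a)<2\}$ is relatively open in $U$. If it is empty, then $N(x+a)=2$ throughout $U$, which is exactly $U\subset\St_X(x)$. Otherwise some available family $A$ exists for that sign. So from now on we may assume that both signs supply strict families. Then Lemma~\ref{lem:strict-cone} gives ambient open cones $\Gamma(x,A)$ on which $Q=N$.

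\textbf{Case $\dim\mathcal G_x=1$.} Here every gradient $DN(d)$, at differentiability points of one cone $\Gamma=\Gamma(x,A)$, is a multiple $\lambda p$ of a single functional. By the Euler identity, $\lambda p(d)=N(d)>0$. Since $N^*(DN(d))=1$, we get $\lambda=1$ up to a fixed sign. Hence $DN=p$ almost everywhere on the connected open cone $\Gamma$. Lemma~\ref{lem:acl} applied to each $k\in\ker p$ shows $N$ is constant along $\ker p$ directions. Combining this with homogeneity, $N=p$ on $\Gamma$. So $\rho_{x,A}(A)$ lies in the facet $\{p=1\}$, and $\Gamma$ is a facet cone.

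\textbf{Case $\dim\mathcal G_x=2$.} Pick $0\ne k$ in the annihilator of $\mathcal G_x$. Then $DN(d)(k)=0$ almost everywhere on every available cone. Lemma~\ref{lem:acl} yields \eqref{eq:cylinder-def} with this same $k$ on every cone. This is the common fixed cylindrical direction.

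\textbf{Case $\dim\mathcal G_x=3$.} This is the main work, and I would argue by distance coordinates.

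1. Choose three differentiability points $d_i=\rho_{x,A_i}(a_i)$ whose gradients $p_i=DN(d_i)$ are independent. Each $d_i$ lies in an open cone where $Q=N$, so $Q$ is differentiable at $d_i$ with $DQ(d_i)=p_i$ as well.

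2. Write $b_i=a_i$. For $y\in S_N$ near $x$, consider the map
\[
\Phi(y)=\bigl(N(y-b_1),N(y-b_2),N(y-b_3)\bigr).
\]
Since $g$ is an isometry fixing $b_i$,
\[
Q(g(y)-b_i)=N(y-b_i).
\]
For $y$ near $x$, the vector $y-b_i$ stays in the cone, so $N(g(y)-b_i)=Q(g(y)-b_i)$ as long as $g(y)$ is also near $x$. This holds by continuity of $g$ and $g(x)=x$. Hence $\Phi(g(y))=\Phi(y)$.

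3. Show that $\Phi$, restricted to a neighbourhood of $x$ in $\R^3$, is locally injective near $x$. Its differential at $x$ is the matrix with rows $p_i$ (using $d_i$ parallel to $x-a_i$ and homogeneity), which is invertible. A Lipschitz inverse function theorem, or Clarke's theorem, then gives that $\Phi$ is locally injective.

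4. The map $g$ sends $S_N$ into $S_Q$, so $g(y)\in\R^3$ near $x$ and $\Phi(g(y))=\Phi(y)$. Local injectivity then gives $g(y)=y$. So a neighbourhood of $x$ lies in $\mathcal F$.

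\textbf{Main obstacle.} The delicate step is step 3. The norm is only differentiable at the $d_i$, not $C^1$ near them. Worse, $x$ itself need not make $y-a_i$ differentiable points. I would handle this by first perturbing $x$ along $\rho$: choose $a_i$ so that $x-a_i$ are differentiability points. This is a full-measure condition, attainable since differentiability points are dense in the cones. I would then use the strict-differentiability-free version, namely injectivity from the Clarke generalized Jacobian. That requires the Clarke gradients near $x-a_i$ to stay close to $p_i$, and this can fail. If it fails, I would fall back on a Baire and measure argument: apply the trichotomy to nearby fixed points, or use monotonicity of convex gradients to get a uniform bound $|N(y-a_i)-N(y'-a_i)-p_i(y-y')|\le\varepsilon|y-y'|$ on a small set.
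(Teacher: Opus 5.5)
Your handling of the face-star alternative and of the rank-one and rank-two cases matches the paper. So does your set-up in the rank-three case: three strict chords $x-a_i$ at differentiability points with independent supports $p_i$, calibration $Q=N$ on the cones $\Gamma(x,A_i)$, and $\Phi(g(y))=\Phi(y)$ for $y$ near $x$ by continuity of $g$ at $x$.

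The gap is Step 3, the only non-formal step, which you leave open. You claim that the needed control of Clarke gradients near $x-a_i$ ``can fail''. You then fall back on an unspecified Baire/measure argument, or on re-applying the trichotomy at nearby fixed points; neither is an argument as stated. The suggestion to perturb $x$ is unavailable, since $x$ is the given fixed point. It is also unnecessary: by $0$-homogeneity of $DN$, your choice $d_i=\rho_{x,A_i}(a_i)$ already makes $x-a_i$ a differentiability point.

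The missing observation is that a convex function differentiable at a point is strictly differentiable there. The paper uses this via upper semicontinuity of the subdifferential. Since $\partial N(x-a_i)=\{p_i\}$, for every $\varepsilon>0$ there is a neighbourhood of $x$ on which every $q\in\partial N(w-a_i)$ satisfies $N^*(q-p_i)\le\varepsilon$. For $y,z$ in that neighbourhood, pick $q_z\in\partial N(z-a_i)$ and $q_y\in\partial N(y-a_i)$. The subgradient inequalities $q_z(y-z)\le N(y-a_i)-N(z-a_i)\le q_y(y-z)$ then give $|N(y-a_i)-N(z-a_i)-p_i(y-z)|\le\varepsilon N(y-z)$.

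Now set $2c=\min_{N(w)=1}\max_i|p_i(w)|$, which is positive by independence of the $p_i$ and compactness. Taking $\varepsilon\le c$ yields the paper's inequality $cN(y-z)\le\max_i|N(y-a_i)-N(z-a_i)|$. This gives injectivity of $\Phi$ on a full neighbourhood of $x$, and your Step 4 then concludes. Equivalently, the Clarke Jacobian of $\Phi$ at $x$ is the single invertible matrix with rows $p_i$, so the Clarke inverse function theorem you mention applies unconditionally. Your last ``monotonicity'' fallback is exactly this argument, and it never fails.
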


\begin{proof}
If the span has dimension three, choose three anchor points $a_i$ whose
chord directions $d_i=x-a_i$ are smooth and whose supports
$p_i=DN(d_i)$ are independent.  The map
$z\mapsto(p_1z,p_2z,p_3z)$ is injective.  Compactness of the unit sphere
and upper semicontinuity of the subdifferential give, in a neighborhood
of $x$,
\[
 cN(y-z)\le
 \max_i|N(y-a_i)-N(z-a_i)|
\]
for some $c>0$.  Shrink the neighborhood so that both $y-a_i$ and
$g(y)-a_i$ remain in the calibrated cone belonging to $d_i$.  Sphere
isometry and \eqref{eq:two-norm-calibration} give
\[
 N(y-a_i)=Q(g(y)-a_i)=N(g(y)-a_i).
\]
The coordinate inequality yields $g(y)=y$, proving (1).

If the span has dimension at most two, choose
$0\ne k\in\mathcal G_x^\perp$.  The gradients of $N$ annihilate $k$
almost everywhere in every available cone.  Lemma~\ref{lem:acl}
gives the cylinder identity.  If the span is one-dimensional, all
gradients on a connected subcone equal one fixed support, so the norm
is linear there and the cone is a facet cone.  Finally, if no strict
open family exists for one sign, openness of strict inequality shows
$N(x+a)=2$ throughout that signed anchor, which is precisely the
face-star assertion.
\end{proof}

\section{Open face-stars and full-support centers}\label{sec:facestar}

The face-star alternative in Proposition~\ref{prop:trichotomy} has a
second rank decomposition.

\begin{proposition}[Face-star trichotomy]\label{prop:face-star}
Suppose $x\in S_N$, $V\subset S_N$ is nonempty and relatively open,
and $[x,v]\subset S_N$ for every $v\in V$.  Then at least one of the
following occurs:
\begin{enumerate}
\item $V$ lies in a facet germ;
\item $\pos V$ is a fixed-direction cylindrical open cone;
\item $x$ is full-support: $\Span J_N(x)=X^*$.
\end{enumerate}
\end{proposition}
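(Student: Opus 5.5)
The plan is a rank argument on the supports of $V$, in the same spirit as Proposition~\ref{prop:trichotomy}. The key observation is that every point of $V$ shares a support with $x$. Indeed, since $[x,v]\subset S_N$, the midpoint $m=(x+v)/2$ has norm one. Any $p\in J_N(m)$ then satisfies $p(x)\le1$, $p(v)\le1$ and $p(x)+p(v)=2$, so $p\in J_N(x)\cap J_N(v)$.

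In particular, at each differentiability point $v\in V$ (almost every point of $\pos V$, by Rademacher) we get $DN(v)\in J_N(x)$. Let
\[
\mathcal H=\Span\{DN(v): v\in \pos V \text{ a differentiability point}\}\subset\Span J_N(x).
\]
By homogeneity the gradients are constant along rays, so it suffices to look at $V$.

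The argument then splits by $\dim\mathcal H$.
\begin{itemize}
\item If $\dim\mathcal H=3$, then $\Span J_N(x)=X^*$, which is alternative (3).
\item If $\dim\mathcal H\le2$, pick $0\ne k\in\mathcal H^\perp$. Then $DN(z)(k)=0$ for almost every $z$ in the open cone $\pos V$. Lemma~\ref{lem:acl} gives $N(z+tk)=N(z)$ whenever $[z,z+tk]$ is compactly contained in $\pos V$, and $k$ is fixed. This is alternative (2). One should check that $\pos V$ is an open cone avoiding $0$; this holds because $V$ is relatively open in $S_N$ and radial projection is a homeomorphism.
\item The one-dimensional subcase deserves a remark even though it is already covered by (2). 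There all gradients on $\pos V$ coincide with a single support $p$. On a connected subcone, a Lipschitz function whose gradient is almost everywhere the constant $p$ equals $p$ (use Lemma~\ref{lem:acl} applied to $N-p$ along each direction, or integrate along segments). Hence $N=p$ there, and $V$ lies in the facet germ $\{p=1\}\cap B_N$, which is alternative (1). The zero-dimensional case is impossible by the Euler identity.
\end{itemize}

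The main point needing care is the use of Lemma~\ref{lem:acl}. As stated, the lemma concerns $N$ itself, so applying it in the facet subcase requires extending it to $N-p$. That extension is the same Fubini/absolute-continuity argument and needs only the Lipschitz property, not convexity.

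One might also worry whether, in the rank-two case, $\Span J_N(x)$ could still be three-dimensional. It does not matter: the statement only asks for at least one alternative, and (2) already holds.
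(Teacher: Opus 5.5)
Your proposal is correct and follows essentially the same route as the paper. Both arguments read off a common support of $x$ and $v$ from the segment $[x,v]$, place the gradients $DN(v)$ inside $\Span J_N(x)$, and split by rank: rank three gives (3), while a nonzero annihilated direction plus Lemma~\ref{lem:acl} gives (2). The only difference is cosmetic. You let the rank-one case fall under (2), which is legitimate since any rank at most two yields the cylinder. The paper instead routes rank one to the facet alternative (1), which your remark also recovers.
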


\begin{proof}
At every smooth $v\in V$, the unique support $p_v=DN(v)$ also norms
$x$, because a support at an interior point of $[x,v]$ must norm both
endpoints.  Let $M=\Span\{p_v:v\in V\text{ smooth}\}$.  Smooth points
have full surface measure.  If $\dim M=1$, continuity puts $V$ into
one exposed facet.  If $\dim M=2$, a nonzero $k\in M^\perp$ is
annihilated by the gradient almost everywhere in $\pos V$, and
Lemma~\ref{lem:acl} gives (2).  If $\dim M=3$, then
$\Span J_N(x)=X^*$.  In particular, the supports at $x$ separate
directions, so $x$ is an extreme point.
\end{proof}

We next state the local full-support rigidity which completes the
closed-open propagation.  Its proof is included because this is where
the non-strictly-convex case differs from the usual three-coordinate
argument.

\begin{proposition}[Full-support alternative]\label{prop:full-support}
Let $e\in S_N$ satisfy $\Span J_N(e)=X^*$, and let $T_e:X\to Y$ be its
canonical tangent isomorphism.  On every connected relatively open
patch
\[
 W\Subset\{x\in S_N:N(x-e)<2,\ N(x+e)<2\},
\]
one has either $f(x)=T_e x$ for all $x\in W$, or the source norm has a
fixed-direction cylindrical open cone.
\end{proposition}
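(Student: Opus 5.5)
The plan is to transport everything by the tangent isomorphism and reduce the statement to the distance-coordinate argument of Proposition~\ref{prop:trichotomy}, with $\pm e$ serving as the calibrated base points. Since $\Span J_N(e)=X^*$, the proof of Proposition~\ref{prop:anchor-source}(3) gives $T_e e=f(e)$. By oddness, $f(-e)=-T_e e$. Put $Q=\|T_e\,\cdot\,\|_Y$ and $g=T_e^{-1}f:S_N\to S_Q$. Then $g(\pm e)=\pm e$, and for $x\in W$ we have $N(x\mp e)<2$, so these chords are strict in the sense used for Lemma~\ref{lem:strict-cone}.

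The first step is a two-norm calibration. The Kadets--Mart\'in tangent correspondence at the full-support point $e$ pairs each support $p\in J_N(e)$ with a target functional $q$ satisfying $q\circ T_e=p$ and $\|q\|=1$. Hence $Q\ge p$ for every $p\in J_N(e)$, and $Q\ge N$ on the normal cone $\pos J_N(e)$ side. More usefully, I will combine this with the stars: \eqref{eq:star-affine} with $\delta_e=0$ gives $g=\mathrm{id}$ on $\St_N(e)\cup\St_N(-e)$. So I plan to run the closed-open scheme of Section~\ref{sec:propagation} on the fixed set $\mathcal F=\{g=\mathrm{id}\}\cap \overline W$.

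If $\mathcal F\cap W$ is nonempty, it is relatively closed by continuity. At a point $x\in\mathcal F\cap W$, Proposition~\ref{prop:trichotomy} applies with anchors taken near $\pm e$ (or near stars of $\pm e$). Its cases (2) and (3) produce a fixed-direction cylindrical open cone, and a facet cone is also such a cone by the remark after the definition. Case (4) is excluded because $N(x\pm e)<2$ propagates to nearby anchor points. So either we reach the cylindrical alternative, or $x$ is interior to $\mathcal F$. Connectedness of $W$ then gives $W\subset\mathcal F$, i.e. $f=T_e$ on $W$.

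The main obstacle is \emph{seeding}: showing that $\mathcal F\cap W\neq\varnothing$, or more precisely that the chord cones from $x\in W$ to the fixed points $\pm e$ are calibrated even though $\pm e$ alone is not a relatively open anchor. My first attempt uses the three-coordinate map
\[
 x\mapsto\bigl(N(x-e),N(x+e),\ \cdot\,\bigr)
\]
with supports drawn from the full-dimensional $J_N(e)$. Full support means that $e$ is exposed by an open cone of functionals, so directions $x-e$ for $x\in W$ sweep an open set of chords whose supports, generically, span $X^*$. If the gradients $DN(x-e)$ over $x\in W$ span only a plane, Lemma~\ref{lem:acl} on the open cone $\pos(W-e)$ yields the cylindrical alternative. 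Otherwise, I would choose three points whose chord supports are independent and apply the coordinate inequality as in Proposition~\ref{prop:trichotomy}(1), using $Q=N$ on chord cones at $\pm e$ (obtained from the isometry, since $g(\pm e)=\pm e$), to conclude that $g$ fixes a neighbourhood. The delicate point is that calibrating $Q=N$ on $\pos(W-e)$ requires $g(x)$ itself to be known. I expect to break this circularity by first proving $g=\mathrm{id}$ on a small patch near $\St_N(e)$, where \eqref{eq:star-affine} already applies, and then propagating as above.
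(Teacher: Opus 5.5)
Your proposal has a genuine gap, and it is the one you label ``seeding'': the argument never constrains $g(x)$ at even one point $x\in W$. What you actually have is $g(\pm e)=\pm e$ and $g=\mathrm{id}$ on $\St_N(e)\cup\St_N(-e)$.

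Proposition~\ref{prop:trichotomy} needs a relatively open set $A$ of fixed points, so that $\Gamma(x,A)$ is an open cone on which $Q=N$. The two fixed points $\pm e$ give only two rays and two distance coordinates. So neither its case (1) nor your exclusion of case (4) via ``anchors near $\pm e$'' is available.

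Your remedy, first fixing a patch near $\St_N(e)$, fails because the star of a full-support point can be trivial. Take a symmetric intersection of Euclidean balls and a point $e$ where three boundary spheres meet transversally. Then $\Span J_N(e)=X^*$, while strict convexity gives $\St_N(\pm e)=\{\pm e\}$ and excludes every cylindrical cone. The proposition therefore asserts $f=T_e$ on $W$ with no open anchor anywhere to start from. Even when the star has interior, it is disjoint from the strict region containing $W$. Propagating from it is then the global argument of Section~\ref{sec:facestar}, which itself calls on this proposition at face-star points.

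Your three-coordinate attempt is circular for the reason you give. The full dimensionality of $J_N(e)$ is also irrelevant there, since the chord supports $DN(x-e)$ are not elements of $J_N(e)$.

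The missing ingredient is the Kadets--Mart\'in chord-support correspondence, applied directly to the strict chords from $x$ to $\pm e$. For almost every $x\in W$ it gives $p'_\pm\circ T_e=p_\pm$, where $p_\pm=DN(x\pm e)$ and $p'_\pm=D\|\cdot\|_Y(f(x)\pm f(e))$. Combine this with $\|f(x)\pm f(e)\|_Y=N(x\pm e)$, $T_ee=f(e)$ and Euler's identity. This gives $p'_\pm(f(x)-T_ex)=0$ pointwise, so $d(x)=f(x)-T_ex$ lies in the line $\ker p'_+\cap\ker p'_-$.

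Differentiating the same identities along the sphere puts $Dd(x)$ in that line as well. Hence on $\{d\ne0\}$ the direction of $d$ is locally constant, say $\R k'$. Then $DN(x\pm e)(T_e^{-1}k')=0$ almost everywhere, and Lemma~\ref{lem:acl} on the open chord cone gives the cylinder. If instead $d\equiv0$, then $f=T_e$ on $W$.

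This argument is pointwise and needs neither a seed nor a closed--open propagation. That pointwise pairing of chord supports through $T_e$ is exactly what your scheme lacks.
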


\begin{proof}
The tangent theorem gives $f(\pm e)=T_e(\pm e)$.  At almost every
$x\in W$, the four source and target chord directions are smooth.
Write
\[
 p_\pm=DN(x\pm e),
 \qquad p'_\pm=D\|\cdot\|_Y(f(x)\pm f(e)).
\]
The chord-support correspondence gives
\begin{equation}\label{eq:support-pair}
 p'_\pm\circ T_e=p_\pm.
\end{equation}
For $d(x)=f(x)-T_ex$, equality of the two endpoint distances and
\eqref{eq:support-pair} imply
\[
 p'_-(d(x))=p'_+(d(x))=0.
\]
The two target supports are independent.  Indeed, a negative
proportionality would force equality in the triangle inequality at
$f(x)$ and contradict the uniqueness of the smooth chord supports; a
positive proportionality would express the full-support extreme point
$-f(e)$ as a proper convex combination of two distinct sphere points.
Thus $d(x)$ lies in the one-dimensional line
\[
 K'(x)=\ker p'_-\cap\ker p'_+.
\]

Differentiate the exact identities
\[
 \|f(x)\pm f(e)\|_Y=N(x\pm e)
\]
in sphere-chart tangent directions.  Subtract
\eqref{eq:support-pair}.  At almost every differentiability point,
\[
 Dd(x)(T_xS_N)\subset K'(x).
\]
On the open set where $d(x)\ne0$, the line $K'(x)$ equals
$\R d(x)$.  In a local chart choose a continuous linear functional
$\lambda$ which does not vanish on $d$ and normalize
$u(x)=d(x)/\lambda(d(x))$.  The preceding differential inclusion
gives $Du=0$ almost everywhere.  Absolute continuity on lines makes
$u$ constant on each connected component.  Consequently the kernel
line is fixed there.  Applying the source version of
\eqref{eq:support-pair} and Lemma~\ref{lem:acl} to either family of
chord cones gives a fixed-direction cylindrical open cone.  If the
nonzero set is empty, $f=T_e$ on $W$.
\end{proof}

\begin{proof}[Proof of the linear-anchor part of Theorem~\ref{thm:main}]
Assume first that $c=0$.  The fixed set $\mathcal F$ contains
$U\cup(-U)$ and is closed.  Let $x\in\mathcal F$.  The rank-two and
rank-one alternatives of Proposition~\ref{prop:trichotomy} contradict
the no-cylinder hypothesis.  In the remaining face-star alternative,
Proposition~\ref{prop:face-star} removes its facet and cylindrical
branches.  Hence $x$ is full-support.

The tangent map at $x$ agrees with $L$ on the signed anchor contained
in $\St_X(x)$.  If the difference span of that open patch has dimension
three, the two linear maps agree everywhere.  If it has dimension two,
the patch is planar and is a facet germ, already excluded.  Thus
$T_x=L$.  Near $x$, points in $\St_X(x)$ are fixed by the star formula;
points in the double strict region are fixed by
Proposition~\ref{prop:full-support}, since its cylinder alternative is
excluded.  Therefore $x$ is interior to $\mathcal F$.

The fixed set is nonempty, closed, and open in the connected sphere,
so it is the whole sphere.  It follows that $f=L$ on $S_X$ and that
$\|Lz\|_Y=N(z)$ for every $z\in X$ by homogeneity.  Surjectivity of
$f$ makes $L$ surjective.
\end{proof}

\section{Removing the translation from an affine anchor}\label{sec:translation}

The translation can be detected by comparing the anchor with its
antipodal copy.

\begin{proposition}[Translation forces a cylinder]\label{prop:translation}
Let $U\subset S_N$ be a nonempty relatively open set which is not a
facet germ, and suppose
\[
 f(x)=Ax+c\qquad(x\in U)
\]
with $A$ invertible.  If $c\ne0$, the source norm $N$ has a
fixed-direction cylindrical open cone.
\end{proposition}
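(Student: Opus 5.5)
We compare the anchor with its antipodal copy. Since $f$ is odd, on $-U$ we have $f(y)=-f(-y)=-(A(-y)+c)=Ay-c$. For $x\in U$ and $a\in -U$ with $N(x-a)<2$, we can then compute target chords:
\[
 f(x)-f(a)=A(x-a)+2c .
\]
Put $Q(z)=\|Az\|_Y$ and $w=A^{-1}c\ne0$. The sphere-isometry identity becomes
\[
 N(x-a)=Q(x-a+2w).
\]
Same-sign chords inside $U$ give $Q=N$ on the cones $\pos(U-U)$; the translation cancels there. So the plan is to show that the shifted identity, together with a two-norm calibration, forces $N$ to be constant in the direction $w$ on an open cone.

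The first step is to produce genuine open chord families. Pick $x\in U$. If $N(x-a)=2$ for all $a$ in an open part of $-U$, then $x$ and $-a$ lie on a common segment of the sphere. Proposition~\ref{prop:face-star} applied at $x$ then gives a facet germ (excluded, at least after shrinking, since $U$ is not a facet germ), a cylinder (done), or a full-support $x$. Full-support points are countable by Proposition~\ref{prop:countable}, so shrinking $U$ lets us assume a point $x_0\in U$ that is not full-support. Hence, after shrinking, there are open $U_1\subset U$ and $A_1\subset -U$ with $N(x-a)<2$ for all $x\in U_1$, $a\in A_1$. As in Lemma~\ref{lem:strict-cone}, the differences $x-a$ then fill an ambient open set $O$; by dimension count it is the Minkowski difference of two 2-dimensional patches. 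On $O$ we have the exact identity $N(z)=Q(z+2w)$.

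The second step is to get a second calibration on the same region. Tangent chord directions $x-x'$ with $x,x'\in U_1$ give $Q=N$ on the cone $\pos(U_1-U_1)$. Since $U_1$ is not a facet germ, this cone is open. The main obstacle is that this cone and $O$ are different regions, so to compare them I would rather calibrate $Q$ against $N$ on $O$ itself.

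I would try this through the full-support-style derivative argument of Proposition~\ref{prop:full-support}. Differentiate $N(x-a)=\|A(x-a)+2c\|_Y$ in $x$ and in $a$ along tangent directions of the anchors. This gives $DQ(z+2w)\circ A^{-1}$-type supports that agree with $DN(z)$ on the tangent planes $T_xS_N$ and $T_aS_N$. When these planes are distinct, which holds generically since $U$ is not a facet germ, they span $X$, so $DN(z)=DQ(z+2w)$ almost everywhere on $O$. Homogeneity (Euler) then gives $DN(z)(z)=N(z)=Q(z+2w)=DQ(z+2w)(z+2w)=DN(z)(z)+2DN(z)(w)$, hence $DN(z)(w)=0$ almost everywhere on $O$. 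Lemma~\ref{lem:acl} with $k=w$ then yields $N(z+tw)=N(z)$ on the open cone $\pos O$, a fixed-direction cylindrical open cone. The delicate points to verify are the genericity of distinct tangent planes and the smoothness needed to differentiate $Q$ almost everywhere. For the latter, the identity $N(z)=Q(z+2w)$ transfers differentiability points of $N$ to $Q$ on a full-measure set.
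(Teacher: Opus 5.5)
Your argument is, at its core, the paper's proof: oddness gives $N(z)=Q(z+2w)$ on an open set $O\subset U+U$, the two sides have the same derivative at almost every $z\in O$, Euler's identity for the two norms yields $DN(z)(w)=0$, and Lemma~\ref{lem:acl} produces the cylinder in direction $w=A^{-1}c$. The surrounding detours are unnecessary and best dropped, for three reasons. First, the identity holds for all $x\in U$, $a\in-U$ with no strictness, so the face-star step can go; its facet branch is not actually excluded anyway, since an open part of $U$ may lie in a facet. Second, equality of the two functions on the open set $O$ gives $DN(z)=DQ(z+2w)$ directly, so neither the same-sign calibration nor the tangent-plane differentiation is needed. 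Third, the openness of $O$ must come from two smooth points of $U$ with distinct tangent planes, via Lipschitz graph charts and a local degree argument as in the paper, not from Lemma~\ref{lem:strict-cone} or a dimension count.
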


\begin{proof}
Put $Q=P\circ A$ and $d=A^{-1}c$.  Oddness gives, for $x,y\in U$,
\begin{equation}\label{eq:sum-distance}
 N(x+y)=P(f(x)-f(-y))=Q(x+y+2d).
\end{equation}
Because $U$ is not a facet germ, it contains two smooth points
$x_0,y_0$ with distinct tangent planes.  Local Lipschitz graph
parametrizations $\phi,\psi$ at those points have tangent images whose
sum is $X$.  Restricting one parameter of $\psi$, the map
\[
 (s_1,s_2,t)\longmapsto\phi(s_1,s_2)+\psi(t)
\]
has an invertible derivative at the origin.  The elementary local
degree theorem therefore shows that $U+U$ contains an ambient open set
$O$.

On $O$, equation \eqref{eq:sum-distance} reads
$Q(z+2d)=N(z)$.  At almost every point both sides are differentiable
and have the same derivative $p_z$.  Euler's identity applied to the
two norms gives
\[
 p_z(z+2d)=Q(z+2d)=N(z)=p_z(z),
\]
so $DN(z)(d)=0$ almost everywhere on $O$.  If $d\ne0$,
Lemma~\ref{lem:acl} gives a cylinder in direction $d$.  Since $A$ is
injective, $c\ne0$ is equivalent to $d\ne0$.
\end{proof}

\begin{proof}[Completion of Theorem~\ref{thm:main}]
The no-cylinder hypothesis also excludes facet germs.  Hence the
anchor patch is not a facet.  Proposition~\ref{prop:translation}
forces $c=0$, and the linear-anchor argument in
Section~\ref{sec:facestar} completes the proof.
\end{proof}

\begin{proof}[Proof of Corollary~\ref{cor:open-star}]
Proposition~\ref{prop:anchor-source} produces an affine open anchor.
Theorem~\ref{thm:main} globalizes it.
\end{proof}

\begin{corollary}[A non-strictly-convex example]\label{cor:double-cone}
The space $\R\oplus_1\ell_2^2$ has the Mazur--Ulam property.
\end{corollary}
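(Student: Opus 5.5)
The plan is to apply Corollary~\ref{cor:open-star} to $X=\R\oplus_1\ell_2^2$ with norm $N(t,u)=|t|+\|u\|_2$. Two hypotheses must be checked: the norm has no fixed-direction cylindrical open cone, and some star $\St_X(e)$ has nonempty relative interior in $S_X$.

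\emph{Open star.} Take the vertex $e=(1,0)$. For $u=(s,w)\in S_X$ with $s\ge0$ we have
\[
N(e+u)=1+s+\|w\|_2=1+N(u)=2.
\]
Hence the closed upper half $\{s\ge 0\}\cap S_X$ lies in $\St_X(e)$, and its part with $s>0$ is relatively open in $S_X$. As a cross-check against Proposition~\ref{prop:anchor-source}(3), $J_X(e)=\{(1,q):\|q\|_2\le1\}$ spans $X^*$, so $e$ is a full-support point and the anchor is even linear. Only an affine anchor is needed, though.

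\emph{No cylindrical cone.} The main work is this step. Suppose $k=(\kappa,m)\ne0$ and $\Gamma$ is a nonempty open cone with $N(z+tk)=N(z)$ along compactly contained segments in $\Gamma$. Shrink $\Gamma$ to an open convex cone $\Gamma'$ avoiding the closed set $\{t=0\}\cup\{u=0\}$. There the sign of $t$ is constant, say $t>0$, and $N$ is smooth with
\[
DN(t,u)=\bigl(1,\,u/\|u\|_2\bigr).
\]
Differentiating the cylinder identity gives $\kappa+\langle u/\|u\|_2,m\rangle=0$ for all $(t,u)\in\Gamma'$. The projection of $\Gamma'$ to the $u$-plane is open, so the unit vectors $u/\|u\|_2$ fill an open arc of the circle. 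A function $\theta\mapsto\langle\theta,m\rangle$ is constant on an open arc only when $m=0$. Then $\kappa=0$, so $k=0$, a contradiction. The case $t<0$ is symmetric.

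The only delicate point is justifying the differentiation. On a segment compactly contained in the open convex set $\Gamma'$, $N$ is smooth, so $DN(z)(k)=0$ follows from the identity in a neighborhood of each point. With both hypotheses verified, Corollary~\ref{cor:open-star} gives the Mazur--Ulam property.
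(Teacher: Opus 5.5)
Your proposal is correct and follows the paper's proof: the vertex $e=(1,0)$ has an open (indeed full-support, hence linear) star, and the gradients $(\pm1,u/\|u\|_2)$ sweep an open arc, so no nonzero fixed direction survives and Corollary~\ref{cor:open-star} applies. Your explicit differentiation of the cylinder identity on a smooth convex subcone is the right way to get the gradient annihilation. The paper attributes this step to Lemma~\ref{lem:acl}, but that lemma is stated in the converse direction.
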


\begin{proof}
At $e=(1,0)$, the norming set
\[
 J(e)=\{(1,b):\|b\|_2\le1\}
\]
spans the dual, and the upper conical surface is contained in
$\St_X(e)$.  Thus an initial linear anchor exists.  On the smooth
region $t\ne0$, $u\ne0$, the gradient is
\[
 \left(\operatorname{sgn}t,\frac{u}{\|u\|_2}\right).
\]
On every ambient open cone, $u/\|u\|_2$ sweeps a nondegenerate circular
arc.  A fixed vector annihilated by all these gradients must be zero.
Lemma~\ref{lem:acl} therefore rules out a fixed-direction cylindrical
open cone.  Corollary~\ref{cor:open-star} applies.
\end{proof}

\section{Spherical segment saturation and prisms}\label{sec:saturation}

The next argument handles some balls which are cylindrical everywhere.
For $E\subset S_X$, define
\[
 \mathcal H(E)=E\cup
 \bigcup\{[a,b]:a,b\in E,\ [a,b]\subset S_X\}.
\]
Iterate this operation and take closures.  Call $E$ \emph{spherically
segment-generating} if the smallest closed, segment-saturated subset
of $S_X$ containing $E$ is $S_X$.

\begin{proposition}[Segment saturation]\label{prop:saturation}
Suppose $f=L$ on $E\subset S_X$, where $L:X\to Y$ is linear.  Then
$f=L$ on the closed spherical segment saturation of $E$.  In
particular, if $E$ spherically segment-generates $S_X$, then $f=L$ on
$S_X$ and $L$ is the unique surjective linear isometric extension.
\end{proposition}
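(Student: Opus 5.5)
The plan is to show that the agreement set $\mathcal E=\{x\in S_X: f(x)=Lx\}$ is closed and closed under the operation $\mathcal H$. Since $\mathcal E$ contains $E$, it will then contain the closed spherical segment saturation of $E$, because that saturation is the smallest closed set with both properties.

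Closedness is immediate: $f$ is continuous, being an isometry, and $L$ is continuous.

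For segment closure, take $a,b\in\mathcal E$ with $[a,b]\subset S_X$. The segment $[a,b]$ is a convex subset of $S_X$, so it lies in some maximal convex subset $C$ of $S_X$. By Tanaka's theorem \cite{Tanaka2014}, $f(C)$ is a maximal convex subset of $S_Y$. By Mankiewicz's theorem \cite{Mankiewicz1972}, $f|_C$ is the restriction of an affine map, since $C$ is a convex body in its affine hull. Hence $f|_{[a,b]}$ is affine, and for $t\in[0,1]$
\[
 f((1-t)a+tb)=(1-t)f(a)+tf(b)=(1-t)La+tLb=L((1-t)a+tb).
\]
So $[a,b]\subset\mathcal E$, and $\mathcal H(\mathcal E)=\mathcal E$. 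Iterating and taking closures never leaves $\mathcal E$, which proves the first assertion.

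If $E$ spherically segment-generates $S_X$, the argument gives $f=L$ on all of $S_X$. By homogeneity, $\|Lz\|_Y=\|z\|_X$ for every $z\in X$, so $L$ is a linear isometry. Surjectivity of $f$ onto $S_Y$ gives $L(S_X)=S_Y$, and hence $L$ is onto $Y$ by homogeneity. Uniqueness holds because any linear extension is determined on $S_X$, which spans $X$.

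The only step needing care is the affineness of $f$ along the segment. One could apply Mankiewicz directly to $[a,b]$, since $f|_{[a,b]}$ is an isometry onto $f([a,b])$, but that requires knowing the image is convex. Passing through the maximal face $C$ avoids this, because Tanaka guarantees $f(C)$ is convex, and Mankiewicz then applies to $f|_C$ between convex sets with nonempty relative interior.
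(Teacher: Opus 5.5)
Your proof is correct and follows the paper's own argument. You place the calibrated segment in a maximal convex subset of $S_X$, use that $f$ is affine there (Tanaka plus Mankiewicz) to propagate $f=L$ along the segment, pass to iterates and closures, and finish by homogeneity and surjectivity. Phrasing the iteration as the statement that the agreement set $\{x\in S_X: f(x)=Lx\}$ is closed and $\mathcal H$-invariant makes the paper's ``induction and continuity'' step precise.
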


\begin{proof}
If $[a,b]\subset S_X$ and its endpoints are calibrated, place the
segment in a maximal convex subset $M$ of $S_X$.  The restriction
$f|_M$ is affine, so
\[
 f((1-t)a+tb)=(1-t)f(a)+tf(b)=L((1-t)a+tb).
\]
Induction and continuity propagate the identity through all iterates
and their closure.  The final statements follow from homogeneity and
surjectivity.
\end{proof}

\begin{proof}[Proof of Theorem~\ref{thm:prism}]
The restriction of a sphere isometry to the facet $F$ is affine and
therefore agrees with a linear isomorphism $L$ on $F$; oddness gives
the same on $-F$.  We show that $F\cup(-F)$ segment-generates $S_X$ in
one step.

Let $x\in S_X$ and choose $p\in J_X(x)$.  Express $x$ as a convex
combination of points of $F\cup(-F)$.  Since $p(x)=1$ and $p\le1$ on
$B_X$, every point carrying positive weight lies in $\{p=1\}$.  Taking
the weighted barycenter separately in $F$ and in $-F$ gives
\[
 x=\lambda a+(1-\lambda)b,
 \qquad a\in F\cap\{p=1\},\quad
 b\in(-F)\cap\{p=1\}.
\]
Thus $[a,b]\subset S_X$, proving the saturation claim.  For
$X=Z\oplus_\infty\R$, take
$F=B_Z\times\{1\}$; then
$B_X=\operatorname{conv}(F\cup(-F))$.
\end{proof}

\section{Escape from a proper quotient arc}\label{sec:arc}

We now isolate one rigorous consequence inside the cylinder regime.
Let $e\ne0$, let $\pi:X\to Z=X/\R e$, and let $n$ be the quotient
norm.  An open cone $\mathcal C\subset X\setminus\{0\}$ is called an
\emph{$L$-visible $e$-cylindrical chord cone} when
\[
 N(w+te)=N(w)
 \quad([w,w+te]\Subset\mathcal C),
 \qquad
 \|Lw\|_Y=N(w)\quad(w\in\mathcal C).
\]

\begin{theorem}[Proper-arc escape]\label{thm:arc-escape}
Let $I\subset S_n$ be an open arc with linearly independent endpoint
representatives $a_0,a_1$, and suppose its projectivization is contained
in the proper closed projective arc determined by
\[
 C=\{\alpha a_0+\beta a_1:\alpha,\beta\ge0\}.
\]
Assume that:
\begin{enumerate}
\item a relatively open calibrated set $U\subset S_X$ consists of the
interiors of nondegenerate complete $e$-fibers over $I$;
\item the fiber over $a_0$ degenerates to a calibrated point $x_0$;
\item at $x_0$ the ordinary distance-coordinate exit has rank two with
common kernel $\R e$, and for every calibrated strict chord endpoint
$z$ under consideration one has
$N(x_0-z)=n(a_0-\pi z)$; and
\item no open face-star occurs at $x_0$.
\end{enumerate}
Then chord saturation at $x_0$ produces $L$-visible
$e$-cylindrical chord cones whose projective quotient directions leave
the original closed projective arc.
\end{theorem}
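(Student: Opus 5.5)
We reduce everything to the quotient plane $Z$ and compare the quotient directions of the chord cones generated at $x_0$ with the arc $C$. By hypothesis (1), $U$ consists of relative interiors of full $e$-fibers over $I$. Every point $z\in U$ is therefore calibrated, $f(z)=Lz$, and lies on a segment $[z_-,z_+]\subset S_X$ parallel to $e$ with $\pi z\in I$. By (2), $x_0$ is calibrated and $\pi x_0=a_0$.

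\emph{Step 1: producing the cones.} Take the strict chord cones $\Gamma(x_0,A)$ of Lemma~\ref{lem:strict-cone}, with $A\subset U$ or $A\subset -U$ open and $N(x_0\pm a)<2$. Hypothesis (4) and Proposition~\ref{prop:trichotomy}(4) guarantee that both signs supply such families. Lemma~\ref{lem:strict-cone} gives $\|Lw\|_Y=N(w)$ on each $\Gamma(x_0,A)$, so each cone is $L$-visible. By hypothesis (3) the exit has rank two with kernel $\R e$, so alternative (2) of Proposition~\ref{prop:trichotomy} applies with $k=e$. Lemma~\ref{lem:acl} then gives $N(w+te)=N(w)$ on compactly contained segments. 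Hence every such cone is an $L$-visible $e$-cylindrical chord cone.

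\emph{Step 2: computing the quotient directions.} The direction $x_0-a$ projects to $a_0-\pi a$. Hypothesis (3) gives $N(x_0-a)=n(a_0-\pi a)$, so the normalized projection is $(a_0-\pi a)/n(a_0-\pi a)$. I would use the anchor of the sign opposite to $U$, namely $A\subset -U$, where $\pi a=-\pi u$ with $\pi u\in I$. The quotient chord direction is then $a_0+\pi u$, with $\pi u$ ranging over an open subset of $I$.

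Write $\pi u=\alpha a_0+\beta a_1$ with $\alpha,\beta\ge0$ and $\beta>0$, since $u$ is interior to the arc. Then $a_0+\pi u=(1+\alpha)a_0+\beta a_1$ still lies in $C$, so this sign does not escape. Consider instead the sign $A\subset U$, which gives direction $a_0-\pi u=(1-\alpha)a_0-\beta a_1$. Because $\beta>0$, its projective class lies in the arc $C$ only if $1-\alpha\le0$ and the class coincides with that of $-((\alpha-1)a_0+\beta a_1)$. Projectively, the closed arc $C$ is identified with its negative. So we must show that the line through $(1-\alpha)a_0-\beta a_1$ is not in $\{\lambda a_0+\mu a_1:\lambda\mu\ge0\}$. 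This holds exactly when $1-\alpha>0$.

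\emph{Step 3: showing $1-\alpha>0$ for some anchor point.} Points $\pi u$ near the endpoint $a_0$ of $I$ have $\alpha\to1$ and $\beta\to0$. So I would use convexity of the quotient unit circle: $I$ is a convex arc of $S_n$ from $a_0$ to $a_1$. This gives $n(\alpha a_0+\beta a_1)=1$ and hence $\alpha+\beta\ge1$. I would also need $\alpha<1$ somewhere, which is equivalent to $\pi u$ lying strictly below the line through $a_0$ parallel to $a_1$.

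If instead $\alpha\ge1$ throughout an open subset of $I$, then $n$ restricted to the direction $a_1$ would be nondecreasing away from $a_0$. Together with the strict-chord condition $n(a_0-\pi u)<2$ (hypothesis (3) combined with $N(x_0-u)<2$), I expect this to force $x_0$ and the fibers to lie on a common support. That would yield an open face-star at $x_0$, contradicting (4).

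This step is the main obstacle. It needs the careful convex-geometric argument that the non-face-star condition pushes some chord direction $a_0-\pi u$ across the projective line $\R a_0$, out of $C$. The first approach I would try is a supporting-functional argument: take $q\in J_n(a_0)$. If every $q(a_0-\pi u)$ had the wrong sign, then $q(\pi u)=1$ on an open part of $I$, and these fibers would be in $\St_X(x_0)$.

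Once some $u$ escapes, openness of $A$ and continuity of $\pi$ make a whole open family of quotient directions leave $C$. This gives the asserted cones.
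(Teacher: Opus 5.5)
\paragraph{There is a genuine gap in Step 3.} Your Steps 1--2 are sound. You obtain the cylinder identity from the rank-two alternative and Lemma~\ref{lem:acl}, whereas the paper moves $z$ along its $e$-fiber in the quotient identity; either works. Your criterion is also correct: with $\pi u=\alpha a_0+\beta a_1$ and $\beta>0$, the class $[a_0-\pi u]$ lies outside $\PP(C)$ exactly when $\alpha<1$. The problem is that Step~3 is the entire content of the escape, and you leave it open along a route that does not work. You look near $a_0$, where $\alpha\to1$ is the borderline case, and it can genuinely fail there. For a Euclidean quotient circle and an arc of opening larger than $\pi/2$, one has $\alpha>1$ on an initial subarc of $I$. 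Whether $\alpha\ge1$ near $a_0$ is thus a property of the shape of $I$. Your proposed implication ``$\alpha\ge1$ on an open part of $I$ $\Rightarrow$ open face-star'' is unproved, and there is no reason for it to hold. The functional sketch with $q\in J_n(a_0)$ also never connects $\alpha\ge1$ to $q(\pi u)=1$; note that $q(a_0-\pi u)=1-q(\pi u)\ge0$ always.

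\paragraph{The missing idea: use the other endpoint, and use (4) for density.} You need density of strict points, not just existence of some strict family of each sign, which is all your Step~1 extracts from (4). The strict endpoints are those with $N(x_0+z)<2$. It is $x_0+z$, not $x_0-z$, that matters, since the face-star is $\St_X(x_0)$. If strict endpoints were not dense in $U$, an open subpatch would lie in $\St_X(x_0)$, contradicting (4). Hence there are open patches $W_j\subset U$ of strict points whose projections contain $q_j\to a_1$. For these points $\alpha\to0$ and $\beta\to1$, so $[a_0-q_j]\to[a_0-a_1]$. This limit lies outside $\PP(C)$: it is your criterion with $\alpha=0$, or directly, $\lambda(a_0-a_1)=\alpha a_0+\beta a_1$ with $\alpha,\beta\ge0$ forces $\lambda=0$. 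Since $\PP(C)$ is closed, the cones $\pos\{x_0-z:z\in W_j\}$ contain quotient directions outside $\PP(C)$ for large $j$, and after shrinking $W_j$ they consist entirely of such directions. A strict family sitting over the part of $I$ near $a_0$ need not escape, which is why existence alone is not enough.
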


\begin{proof}
The rank-two endpoint identity says that, for every calibrated point
$z$ joined to $x_0$ by a strict ordinary chord,
\begin{equation}\label{eq:quotient-distance}
 N(x_0-z)=n(a_0-\pi z).
\end{equation}
If the set of such $z$ were not dense in $U$, an open subpatch would
satisfy $N(x_0+z)=2$ and would form an open face-star.  Hence choose
open patches $W_j$ of strict points whose quotient projections contain
$q_j\to a_1$.

Set
\[
 \mathcal C_j=\{t(x_0-z):t>0,\ z\in W_j\}.
\]
Ordinary chord-direction invariance of domain makes $\mathcal C_j$ an
ambient open cone.  Calibration at both endpoints and sphere isometry
give
\[
 \|L(x_0-z)\|_Y=N(x_0-z).
\]
Because $z$ lies in the interior of a complete $e$-fiber, it may be
moved a small distance in direction $e$ without changing $\pi z$.
Equation \eqref{eq:quotient-distance} then gives the cylinder identity
on $\mathcal C_j$.  Thus every $\mathcal C_j$ is $L$-visible.

Its quotient directions contain
\[
 [\pi(x_0-z_j)]=[a_0-q_j]\longrightarrow[a_0-a_1].
\]
But
\begin{equation}\label{eq:outside-cone}
 [a_0-a_1]\notin\PP(C).
\end{equation}
Indeed, an identity
$\lambda(a_0-a_1)=\alpha a_0+\beta a_1$ with
$\alpha,\beta\ge0$ would imply
$\alpha=\lambda$ and $\beta=-\lambda$.  This is impossible for
$\lambda\ne0$.  Since $\PP(C)$ is compact, the quotient direction of
$\mathcal C_j$ lies outside it for all sufficiently large $j$.
\end{proof}

\begin{remark}[What Theorem~\ref{thm:arc-escape} does not say]
The conclusion is equality of the two norms on new cones.  It does not
assert
\[
 f\left(\frac{w}{N(w)}\right)
 =\frac{Lw}{N(w)}.
\]
Thus the escaped cone is not automatically a new sphere-map anchor.
It may land in another quotient arc.  The theorem excludes a terminal
network confined to one proper projective arc, but it does not exclude
a network made of several arcs.
\end{remark}

\section{The remaining affine-anchor problem}\label{sec:remaining}

Theorem~\ref{thm:main} begins with an affine open anchor, while
Proposition~\ref{prop:anchor-source} produces one from a facet or an
open cofacial star.  These source conditions need not be automatic.
The current convex-geometric reduction is the following.

Fix an auxiliary Euclidean norm $|\cdot|_2$ and define closed subsets
of the sphere by
\[
 \mathcal M_m=\{x\in S_X:\exists v,\ |v|_2=1,\
 x\pm m^{-1}v\in S_X\}.
\]
The nonextreme points are exactly $\bigcup_m\mathcal M_m$.  Baire's
theorem yields a dichotomy.

\begin{proposition}[Unanchored geometric dichotomy]\label{prop:unanchored}
Either $\Ext(B_X)$ is a dense $G_\delta$ subset of $S_X$, or there is a
nonempty relatively open $V\subset S_X$ and $\varepsilon>0$ such that
every $x\in V$ is the midpoint of a spherical segment of Euclidean
half-length at least $\varepsilon$.  If $B_X$ has no facet, the line
direction through $x$ is unique, varies continuously with $x$, and is
constant along each local ruling segment.
\end{proposition}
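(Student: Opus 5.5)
The plan has two parts: a Baire-category argument for the dichotomy, then a separate rigidity argument for the uniqueness and continuity of ruling directions when $B_X$ has no facet.

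\emph{The dichotomy.} First I check that each $\mathcal M_m$ is closed. If $x_j\to x$ with witnesses $v_j$, the witnesses lie on the compact Euclidean unit sphere, so a subsequence converges to some $v$. Continuity of $N$ then gives $x\pm m^{-1}v\in S_X$. Next, a point $x\in S_X$ is nonextreme exactly when it is the midpoint of a nondegenerate segment in $B_X$. Such a segment lies in a supporting face, hence in $S_X$, and after shortening it we may take its half-length to be $m^{-1}$ in the Euclidean norm. This gives $S_X\setminus\Ext(B_X)=\bigcup_m\mathcal M_m$, so $\Ext(B_X)$ is a $G_\delta$. Suppose every $\mathcal M_m$ has empty relative interior. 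Then each $S_X\setminus\mathcal M_m$ is open and dense. Since $S_X$ is a compact metric space, Baire's theorem makes the intersection $\Ext(B_X)$ dense. Otherwise some $\mathcal M_m$ contains a nonempty relatively open set $V$, and we take $\varepsilon=m^{-1}$.

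\emph{Uniqueness of the direction.} Now assume $B_X$ has no facet. Suppose some $x\in V$ were the midpoint of two spherical segments with independent directions $v,w$. Both segments lie in the face $S_X\cap\{p=1\}$ for any $p\in J_X(x)$. That face is convex and contains $x\pm\varepsilon v$ and $x\pm\varepsilon w$. Their convex hull is a two-dimensional set, so the face is two-dimensional, i.e.\ a facet, which is excluded. So the direction $[v(x)]$ is unique.

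\emph{Continuity and constancy along rulings.} For continuity I use the same compactness argument as before. Take $x_j\to x$ in $V$ with unit directions $v_j$. Any limit point $v$ of the $v_j$ satisfies $x\pm\varepsilon v\in S_X$, so by uniqueness $v=\pm v(x)$. Hence the projective direction map is continuous. For constancy along a ruling, take $y$ in the relative interior of the segment $[x-\varepsilon v(x),x+\varepsilon v(x)]$ with $y\in V$. The point $y$ is the midpoint of a short subsegment in direction $v(x)$. It is also the midpoint of a segment in direction $v(y)$, of half-length $\varepsilon$. Both segments lie in the face exposed by any $p\in J_X(y)$. If the two directions differed, that face would again be two-dimensional, contradicting the no-facet assumption.

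\emph{Main obstacle.} I expect the subtle point to be this last uniqueness step, because the segments used there have different lengths. The argument works because it only needs some nondegenerate segment in each direction inside one face. The real care goes into checking that the segments are contained in a common supporting face: for every $p\in J_X(y)$, $p$ is at most $1$ on $B_X$ and equals $1$ at an interior point of each segment, so $p=1$ on the whole of each segment.
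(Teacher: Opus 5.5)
Your proof is correct and follows essentially the same route as the paper. Both arguments run as follows: each $\mathcal M_m$ is closed by compactness, Baire's theorem gives the dichotomy, and under the no-facet hypothesis the ruling direction is unique because two nonparallel segments through $x$ would span a two-dimensional face. Limits of the uniformly long segments, combined with this uniqueness, then give continuity and constancy along rulings; your version just spells out the common-supporting-face and segment-shortening details that the paper leaves implicit.
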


\begin{proof}
Each $\mathcal M_m$ is closed by compactness of the Euclidean unit
sphere.  If all have empty interior, their union is meagre and the
extreme points form a dense $G_\delta$.  Otherwise an open subset of
one $\mathcal M_m$ gives the uniform segment length.  In the absence of
a facet, two nonparallel spherical segments through the same point
would generate a planar face germ, so the direction is unique.  Limits
of the uniformly long segments prove continuity, and uniqueness makes
the direction constant along each segment.
\end{proof}

Removing the affine-anchor hypothesis therefore requires one of two
new arguments:
\begin{enumerate}
\item reconstruct local distance coordinates on a dense extreme
skeleton without assuming local strict convexity; or
\item prove that a uniformly ruled patch with continuous varying
direction either contains an open cofacial star, becomes a fixed
cylinder, or directly determines an affine formula for the sphere
isometry.
\end{enumerate}

The second branch admits one further reduction.  It is the precise
analogue, before an open anchor exists, of the norm cones appearing in
Theorem~\ref{thm:arc-escape}.

\begin{proposition}[Two transverse rulings]\label{prop:two-rulings}
Suppose the ruled patch contains two nondegenerate spherical segments
\[
 \Sigma_i=\{x_i+t k_i:t\in I_i\}\subset S_X,
 \qquad 0\in I_i,\quad i=0,1,
\]
and put $d=x_0-x_1$.  If $d,k_0,k_1$ are linearly independent, then
there are a linear map $A:X\to Y$, an affine map $B:X\to Y$, and a
nonempty ambient open cone $\Gamma\subset X\setminus\{0\}$ such that
\[
 \|Aw\|_Y=N(w)\quad(w\in\Gamma),
 \qquad f=B\quad\hbox{on }\Sigma_0\cup\Sigma_1.
\]
If $N$ has no fixed-direction cylindrical open cone, then $A$ is a
linear isomorphism.
\end{proposition}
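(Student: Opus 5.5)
Each segment $\Sigma_i$ lies in a maximal convex subset $M_i$ of $S_X$. By \cite{Tanaka2014} the image $f(M_i)$ is again a maximal convex subset of $S_Y$, and by Mankiewicz's theorem the restriction $f|_{M_i}$ is affine. On $\Sigma_i$ this gives
\[
 f(x_i+tk_i)=f(x_i)+t\,k_i'\qquad(t\in I_i)
\]
for some vector $k_i'\in Y$. Since $f$ is an isometry and $\Sigma_i$ is nondegenerate, $k_i'\neq0$.

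I will define $A$ on the basis $d,k_0,k_1$ by
\[
 Ad=f(x_0)-f(x_1),\qquad Ak_i=k_i',
\]
and put $B(w)=f(x_1)+A(w-x_1)$. Then $B(x_1)=f(x_1)$ and $B(x_0)=f(x_1)+Ad=f(x_0)$. Along each segment $B$ has slope $Ak_i=k_i'$, so $f=B$ on $\Sigma_0\cup\Sigma_1$. This part is formal.

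For the calibration, take $s\in I_0$ and $t\in I_1$. The chord joining the two ruled points is
\[
 (x_0+sk_0)-(x_1+tk_1)=d+sk_0-tk_1,
\]
and sphere isometry gives
\[
 \|A(d+sk_0-tk_1)\|_Y=\|f(x_0+sk_0)-f(x_1+tk_1)\|_Y=N(d+sk_0-tk_1).
\]
The map $(s,t)\mapsto d+sk_0-tk_1$ has image a two-dimensional affine patch $P$ around $d$, and $P$ does not pass through the origin because $d,k_0,k_1$ are independent. Both norms are positively homogeneous, so the identity extends from $P$ to $\pos P$. That set is an ambient open cone $\Gamma$: the map $(r,s,t)\mapsto r(d+sk_0-tk_1)$ has invertible derivative at $(1,0,0)$, its columns being $d,k_0,-k_1$. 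This gives $\|Aw\|_Y=N(w)$ on $\Gamma$.

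It remains to show that $A$ is injective when $N$ has no fixed-direction cylindrical open cone. Suppose $0\neq k\in\ker A$. For $w\in\Gamma$ and small $\tau$, and shrinking to a subcone so that the relevant segments are compactly contained in $\Gamma$,
\[
 N(w+\tau k)=\|A(w+\tau k)\|_Y=\|Aw\|_Y=N(w).
\]
So $\Gamma$ is a cylindrical open cone with the fixed direction $k$, which contradicts the hypothesis. Hence $A$ is injective, and so a linear isomorphism onto its image. When $\dim Y=3$, which holds because a sphere isometry onto $S_Y$ forces $\dim Y=\dim X$, $A$ is an isomorphism $X\to Y$.

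The main point to check is the first step. The affine description along $\Sigma_i$ requires only that $\Sigma_i$ lies in a maximal convex set, which Zorn's lemma or compactness supplies. Surjectivity of $f$ is what lets Tanaka's theorem give that $f(M_i)$ is convex, so that Mankiewicz applies between convex bodies. I also expect to have to note that the relevant sets $f(M_i)$ have nonempty interior in their affine hulls; otherwise the argument would have to be restricted to segments.
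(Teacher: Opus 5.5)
Your proposal is correct and follows essentially the same route as the paper. Both arguments:
\begin{itemize}
\item use affineness of $f$ on the maximal convex sets containing $\Sigma_i$;
\item define $A$ on the basis $d,k_0,k_1$ by $Ad=f(x_0)-f(x_1)$ and $Ak_i=k_i'$;
\item read off $\|A(d+sk_0-tk_1)\|_Y=N(d+sk_0-tk_1)$ from cross-segment distances;
\item spread this identity to an open cone via the invertible derivative of $(r,s,t)\mapsto r(d+sk_0-tk_1)$;
\item turn a nonzero kernel vector of $A$ into a fixed-direction cylinder.
\end{itemize}
Your remark that $\dim Y=\dim X$ fills in a point the paper leaves implicit, since it is what makes injectivity give an isomorphism $X\to Y$. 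One small fix: if $0$ is an endpoint of some $I_i$, take $s,t$ in the interiors of $I_0,I_1$, so that $\pos P$ is genuinely open.
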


\begin{proof}
Affineness on maximal convex spherical subsets gives
\[
 f(x_i+t k_i)=y_i+t\ell_i\qquad(t\in I_i).
\]
Set $d'=y_0-y_1$ and define $A$ by
$Ad=d'$, $Ak_0=\ell_0$, and $Ak_1=\ell_1$.  Every cross-segment
distance gives
\[
 N(d+t k_0-s k_1)
 =\|A(d+t k_0-s k_1)\|_Y.
\]
The map
\[
 (r,t,s)\longmapsto r(d+t k_0-s k_1)
\]
has full-rank derivative, so the positive hull of these differences is
an ambient open cone $\Gamma$.  Homogeneity gives the asserted norm
identity on $\Gamma$.  If $0\ne k\in\ker A$, that identity implies
$N(w+uk)=N(w)$ on every sufficiently short segment contained in
$\Gamma$, producing a forbidden fixed-direction cylinder.  Hence $A$
is injective under the no-cylinder hypothesis.  Finally,
\[
 B(z)=A(z-x_1)+y_1
\]
agrees with $f$ on both segments by construction.
\end{proof}

Proposition~\ref{prop:two-rulings} does not yet produce an affine open
anchor: two line segments have empty relative interior in the sphere.
It produces the same pair of ingredients seen after the proper-arc
escape: an ambient norm-calibrated cone and a lower-dimensional set on
which the sphere map is genuinely calibrated.

\begin{problem}[The calibration-upgrade obstruction]\label{prob:upgrade}
Find intrinsic hypotheses on an ambient open cone $\Gamma$, a linear
isomorphism $A$, and a calibrated lower-dimensional spherical set $E$
which force
\[
 \|Aw\|_Y=N(w)\quad(w\in\Gamma),
 \qquad f(x)=Ax+c\quad(x\in E)
\]
to imply
\[
 f(x)=Ax+c
\]
on a nonempty relatively open subset of $S_X$.
\end{problem}

This is the single common analytic obstruction reached by the two
routes developed here.  In the multi-arc cylinder route,
Theorem~\ref{thm:arc-escape} supplies escaped cones with
$\|Lw\|_Y=N(w)$, while calibrated endpoint fibers supply $E$.  In the
transverse-ruled route, Proposition~\ref{prop:two-rulings} supplies
$\Gamma,A$, and two calibrated rulings.  Solving
Problem~\ref{prob:upgrade} in either geometry would turn the new norm
cone into an actual open anchor, after which the propagation results of
Sections~\ref{sec:propagation}--\ref{sec:translation} apply.

There are still geometric entrance tasks: a multi-arc network must be
organized so that enough endpoint data survive, and a continuously
ruled patch with no transverse pair must be classified into facet,
cylinder, common-vertex, or tangential-developable behavior.  The dense
extreme branch of Proposition~\ref{prop:unanchored} also requires a
separate entrance argument.  These tasks determine how one reaches the
data of Problem~\ref{prob:upgrade}; once those data are reached, the
remaining logical gap is the one calibration upgrade stated above.
Consequently, the results of this paper do not decide Tingley's problem
for an arbitrary three-dimensional real Banach space.

\section*{Acknowledgement of proof status}

The arguments in Sections~\ref{sec:propagation}--\ref{sec:arc} were
assembled as part of an ongoing proof search.  Before submission, the
tangent-map dependencies in Propositions~\ref{prop:anchor-source} and
\ref{prop:full-support}, the low-regularity differentiation step in
Proposition~\ref{prop:full-support}, the endpoint identity
\eqref{eq:quotient-distance}, and
Proposition~\ref{prop:two-rulings} should receive an independent
line-by-line audit.  The conditional statements are deliberately
separated from Problem~\ref{prob:upgrade} so that no open branch is used
as a theorem.

\end{document}